\newif\ifsiam
\newtheorem{thm}{Theorem}
\newtheorem{cor}{Corollary}
\newtheorem{lem}{Lemma}
\newtheorem{rem}{Remark}
\newtheorem{prop}{Proposition}
\crefname{prop}{proposition}{propositions}
\def\ME{\widehat{M}}
\def\NC{\mathit{NC}}
\def\siamqedhere{\ifsiam\else\qedhere\fi}
\title{Conservative and accurate solution transfer between high-order and low-order refined finite element spaces}
\author{
   Tzanio Kolev\thanks{Center for Applied Scientific Computing,
   Lawrence Livermore National Laboratory
   (\email{kolev1@llnl.gov}, \email{pazner1@llnl.gov}).}
   \and
   Will Pazner\footnotemark[1] \thanks{Corresponding author.}
}
\title[HO--LOR Solution Transfer]{Conservative and accurate solution transfer between high-order and low-order refined finite element spaces}
\author{Tzanio Kolev and Will Pazner}
\address{Center for Applied Scientific Computing, Lawrence Livermore National Laboratory}
\begin{document}

\ifsiam\maketitle\fi

\begin{abstract}
In this paper we introduce general transfer operators between high-order and low-order refined finite element spaces that can be used to couple high-order and low-order simulations.
Under natural restrictions on the low-order refined space we prove that both the high-to-low-order and low-to-high-order linear mappings are conservative, constant preserving and high-order accurate.
While the proofs apply to affine geometries, numerical experiments indicate that the results hold for more general curved and mixed meshes.
These operators also have applications in the context of coarsening solution fields defined on meshes with nonconforming refinement.
The transfer operators for $H^1$ finite element spaces require a globally coupled solve, for which robust and efficient preconditioners are developed.
We present several numerical results confirming our analysis and demonstrate the utility of the new mappings in the context of adaptive mesh refinement and conservative multi-discretization coupling.
\end{abstract}

\ifsiam\else\maketitle\fi

\section{Introduction}

High-order numerical methods, including conforming finite elements, spectral elements, and discontinuous Galerkin methods, promise high efficiency and accurate solutions, in particular on modern computing architectures \cite{ceed_bp_paper_2020,Kronbichler2021,Hutchinson2016}.
However, traditional low-order methods remain useful for a large range of practical applications.
Furthermore, the development of stable high-order methods poses additional challenges \cite{CHQZ2,Anderson2014}, and their efficient implementation requires significant infrastructure \cite{Kronbichler2021, Anderson2020}.
For these reasons, while increasing number of components in large-scale simulation codes are transitioning to high-order, many other components remain low-order \cite{osiris,ceed-ms23,marbl}.
This is a considerable challenge for multi-physics simulations that need both types of components, since solution data must be exchanged between the high-order and low-order parts of the simulation.
Coupling high-order discretizations with low-order methods is also important for the purposes of preconditioning \cite{Orszag1980,Canuto2010,Pazner2020a,Bello-Maldonado2019}, shock capturing \cite{Persson2019,Ramirez2020}, and limiting \cite{Vilar2019,Pazner2020}, among others.
In the context of multigrid methods (in particular $p$-multigrid methods), prolongation and restriction operators are required to transfer solutions and residuals between high-order and low-order levels in the multigrid hierarchy \cite{Helenbrook2003,Helenbrook2006,Fidkowski2005,Sundar2015}.

The goal of this paper is to provide practical tools for solution transfer between high-order and low-order finite element spaces with supporting analysis that ensures both accuracy and conservation.
We propose transfer operators that are designed to preserve constant fields (cf.\ freestream preservation, \cite{Thomas1979,Kopriva2006}), conserve integrated quantities of interest (e.g.\ total mass, momentum, and energy), and retain as high of approximation properties as allowed by the given spaces.
We develop a general methodology to define such operators in the abstract setting, and then consider the specific case where the low-order space is obtained by the so-called \textit{low-order refined} procedure, whereby each element of the high-order mesh is subdivided into a number of subelements of lower order.

The high-to-low-order transfer operator $R$ and the low-to-high-order transfer operator $P$ are defined by simple variational problems \eqref{eq:R}--\eqref{eq:P}, and explicit formulas \eqref{eq:Rmat}--\eqref{eq:Pmat}, that can be implemented easily and efficiently in high-order application codes.
Assuming only that the low-order space is large enough (in the sense of condition \eqref{eq:VL}), we prove that the solution transfer will be conservative and constant preserving in both directions.
For the important practical case of a low-order refined space on a mesh with tensor product elements, we prove both conservation and high-order accuracy provided sufficient refinement (cf.\ \Cref{lem:orthogonality}), with spacing based on a Gauss--Lobatto-like quadrature rule.
Both of these requirements are natural and easy to satisfy.
Our approach works in 2D and 3D, on tensor-product and mixed meshes, and can be extend to high-order curved meshes, in which case one needs to choose between conservation and constant preservation.
While the proofs of the theoretical properties hold for meshes consisting of affine tensor-product elements, our numerical results indicate that these properties do generalize to the high-order curved and mixed meshes.
Besides coupling high-order and low-order codes, the transfer operators can also be used in other applications, e.g.\ coarsening in adaptive mesh refinement, an example of which is illustrated in the numerical results.

The rest of the paper is organized as follows.
In \Cref{sec:mappings}, we define the transfer operators in the abstract setting, and prove several important properties, particularly regarding conservation.
The accuracy of the resulting operators, relying on analysis of one-dimensional quadrature rules, is considered in \Cref{sec:accuracy}.
Implementation details and several numerical experiments verifying the theoretical properties of the operators are provided in \Cref{sec:numerical}.
This section includes also a multi-discretization example illustrating the high-order coupling of a structured grid finite volume advection solver with a high-order finite element space.
We end with conclusions in \Cref{sec:conclusions}.

\section{Mappings between high-order and low-order refined spaces}
\label{sec:mappings}

In this section we introduce the mappings between high-order (HO) and low-order refined (LOR) spaces that we propose as general transfer operators for coupling high-order and low-order simulations.
Here we focus on the conservation and constant preservation properties of the mappings, their accuracy is discussed in the following \Cref{sec:accuracy}.

\subsection{General mappings}

We begin by defining the transfer operators in general abstract settings.
Let $V$ be a Hilbert space with inner product $(\cdot\,,\cdot)$, and let $V_H$ and $V_L$ denote finite-dimensional subspaces of $V$.
In this abstract setting, $V_H$ represents a ``high-order'' subspace of $V$, and $V_L$ represents a ``low-order refined'' subspace of $V$, with the only requirement being that $V_L$ is  sufficiently large, such that
\begin{equation} \label{eq:VL}
  V_H \cap V_L^\perp = \{ 0 \}.
\end{equation}
Our goal is to define transfer operators between $V_H$ and $V_L$ that are conservative, accurate, and constant preserving.
We define these operators, $R : V_H \to V_L$ and $P : V_L \to V_H$ as follows
\begin{equation} \label{eq:R}
  (R u_H, v_L) = (u_H, v_L) \qquad\text{for all $v_L \in V_L$},
\end{equation}
and
\begin{equation} \label{eq:P}
  (P v_L, R u_H) = (v_L, R u_H) \qquad\text{for all $u_H \in V_H$.}
\end{equation}
These operators are illustrated by the following diagram:
\[
\begin{tikzcd}[column sep=1cm,row sep=1cm]
   V_H \drar[shift left=0.85]{R} & \lar[swap]{P} V_L \dar[shift left=2]{Q=RP} \\
    & S \arrow[hook]{u}{} \ular[shift left=0.85]{R^{-1}}
\end{tikzcd}
\]
In the remainder of the paper, we refer to the operator $R$ as the \textit{restriction} operator, and the operator $P$ as the \textit{prolongation operator}.

\begin{thm} \label{thm:abstract-properties}
   Assume that $V_H$ and $V_L$ satisfy \eqref{eq:VL}, and introduce the subspace $S = R(V_H) \subseteq V_L$.
   Then the transfer operators $R$ and $P$ defined by \eqref{eq:R} and \eqref{eq:P} have the following properties:
   \begin{enumerate}
      \item $R$ is injective, and $R: V_H \mapsto S$ is a bijection.
      \item $P$ is surjective, and $P$ is a left inverse of $R$, i.e.\ $PR : V_H \to V_H$ is the identity operator.
      \item $Q = RP : V_L \mapsto S$ is a projection, and $P = R|_S^{-1} Q$.
      \item For any functions $1_L \in V_L$ and $1_S \in S$, we have the following conservation properties:
      \begin{equation} \label{eq:conservation}
         (R u_H, 1_L) = (u_H, 1_L)
         \qquad\text{and}\qquad
         (P v_L, 1_S) = (v_L, 1_S).
      \end{equation}
      \item For any $u \in V_H \cap V_L$, $u = R u = P u$.
   \end{enumerate}
\end{thm}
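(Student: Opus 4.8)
The plan is to recognize that $R$ is simply the orthogonal projection onto $V_L$ restricted to $V_H$: equation \eqref{eq:R} characterizes $Ru_H$ as the unique element $\Pi_L u_H \in V_L$ whose difference from $u_H$ is orthogonal to $V_L$, and this exists and is unique because $V_L$ is finite-dimensional. With this identification, property (1) is immediate: $\ker R$ consists of the $u_H \in V_H$ orthogonal to all of $V_L$, i.e. $V_H \cap V_L^\perp = \{0\}$ by \eqref{eq:VL}, so $R$ is injective, and since $S = R(V_H)$ by definition, $R : V_H \to S$ is a bijection.

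Before turning to $P$, I would first confirm that \eqref{eq:P} determines $Pv_L \in V_H$ uniquely, since every later identity depends on this. Using $S = R(V_H)$, \eqref{eq:P} reads $(Pv_L, s) = (v_L, s)$ for all $s \in S$, so well-posedness amounts to nondegeneracy of the pairing $(w, s)$ on $V_H \times S$ in its first argument. The single genuine computation is the identity $(w, Rw) = \|Rw\|^2$, which follows from the orthogonal splitting $w = Rw + (w - Rw)$ with $w - Rw \perp V_L \ni Rw$. Hence if $(w, s) = 0$ for all $s \in S$, taking $s = Rw$ forces $Rw = 0$ and then $w = 0$ by injectivity; since $\dim S = \dim V_H < \infty$, the pairing is nondegenerate and $P$ is well-defined. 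I expect this nondegeneracy step to be the main obstacle, as everything that follows is formal manipulation of the defining relations.

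For property (2), I would check that $u_H$ itself solves the problem defining $P(Ru_H)$: for every $s \in S \subseteq V_L$, \eqref{eq:R} gives $(Ru_H, s) = (u_H, s)$, so uniqueness yields $P(Ru_H) = u_H$, i.e. $PR = \mathrm{id}$ on $V_H$; surjectivity of $P$ is then automatic since each $u_H = P(Ru_H)$ lies in the range of $P$. Property (3) follows formally from $PR = \mathrm{id}$: we get $Q^2 = R(PR)P = RP = Q$, so $Q$ is idempotent, its range lies in $S$ by construction, and for $s = Ru_H \in S$ we compute $Qs = RPRu_H = Ru_H = s$, showing $Q$ is a projection onto $S$; applying $R|_S^{-1}$ to $Q = RP$ gives the factorization $P = R|_S^{-1}Q$.

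Finally, property (4) is a direct reading of the definitions: setting $v_L = 1_L$ in \eqref{eq:R} gives the first conservation identity, and using the test function $1_S \in S$ in the rewritten form of \eqref{eq:P} gives the second. For property (5), if $u \in V_H \cap V_L$ then $u \in V_L$ already satisfies \eqref{eq:R}, so $Ru = u$ by uniqueness, and combining this with $PR = \mathrm{id}$ yields $Pu = P(Ru) = u$, which completes the argument.
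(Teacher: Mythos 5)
Your proof is correct and follows essentially the same route as the paper: identify $R$ with the $L^2$ projection onto $V_L$ restricted to $V_H$, get injectivity from \eqref{eq:VL}, establish well-posedness of $P$ via the pairing on $V_H\times S$, and deduce the remaining properties formally. The only (cosmetic) differences are that you make the nondegeneracy argument for $P$ more explicit through the identity $(w,Rw)=\|Rw\|^2$, where the paper routes the same fact through the orthogonal projection onto $S$ followed by $R|_S^{-1}$, and you prove $PR=\mathrm{id}$ by verifying $u_H$ solves the defining equation rather than by the paper's computation with $RPR$.
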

\begin{proof} \leavevmode
\begin{enumerate}
   \item
      First, $R$ is well-defined as the orthogonal projection between two finite dimensional subspaces of $V$ (see \eqref{eq:Rmat} for an explicit formula for its matrix representation).
      Suppose $R u_H = R w_H$ for some $u_H, w_H \in V_H$.
      Then, for any $v_L \in V_L$
      \[
         0 = (R u_H - R w_H, v_L) = (R u_H, v_L) - (R w_H, v_L) = (u_H - w_H, v_L)
      \]
      by \eqref{eq:R}.
      Since $V_H \cap V_L^\perp = \{ 0 \}$, we have $u_H = w_H$, proving injectivity.
      $S$ is defined as the range of $R$, and so $R : V_H \mapsto S$ is a bijection, completing the proof
      of the first property.
   \item
      Since $S$ is a finite dimensional subspace of $V_L$, given $v_L \in V_L$ there is an unique orthogonal projection onto $S$, i.e. there is $w_H \in V_H$ such that $(R w_H, R u_H) = (v_L, R u_H)$ for all $u_H \in V_H$. By \eqref{eq:R} this implies that $P v_L := w_H$ is well-defined (see \eqref{eq:Pmat} for an explicit formula for its matrix representation).
      Now, let $u_H \in V_H$ be given. For any $w_H \in V_H$ we have
      \[
         (R P R u_H, R w_H)
            = (P R u_H, R w_H)
            = (R u_H, R w_H)  \qquad\text{by \eqref{eq:R} and \eqref{eq:P}}
      \]
      which gives us $RPR u_H = R u_H$.
      By injectivity of $R$, this implies $PR u_H = u_H$ for any $u_H \in V_H$, and so $P$ is surjective, and $PR = I$, proving the second property.
   \item
      Note that, for $Q = RP$, $Q^2 = RPRP = RP = Q$, and so $Q$ is a projection, and $P=R|_S^{-1}Q$ by the definition of $Q$ and the bijectivity of $R: V_H \mapsto S$. This means that we can think of $P$ as a two-step process: an orthogonal projection from $V_L$ to $S$, followed by inversion with $R$.
   \item
      The conservation properties hold for any $1_L \in V_L$ and $1_S \in S$ from the definitions \eqref{eq:R} and \eqref{eq:P}.
      Note that $R$ has stronger conservation properties than $P$ because, in general, $V_L$ is larger than $S$.
   \item
      Finally, setting $u$ for $Ru$ clearly satisfies \eqref{eq:R}, and so $Ru = u$. Then $PR = I$ implies $Pu = PR u = u$, completing the proof. \siamqedhere
\end{enumerate}
\end{proof}

\Cref{thm:abstract-properties} shows that the restriction and prolongation operators defined by \eqref{eq:R} and \eqref{eq:P} have many desirable properties for coupling simulations posed in the $V_H$ and $V_L$ spaces.
For example, since $PR = I$, any ``high-order'' function mapped with $R$ can be exactly recovered by $P$ and so no information is lost by using the ``low-order'' space.
In addition, if constant functions belong to both $V_H$ and $V_L$, then both operators preserve them.
Furthermore, if $V=L^2(\Omega)$, both operators are conservative in the sense that they preserve the integrals over $\Omega$, e.g. the mass is preserved when transferring density.
Note than even if the condition \eqref{eq:VL} does not hold, the restriction operator $R$ still enables a one-way conservative map from high to low order.
However, when \eqref{eq:VL} is satisfied, we have a much more useful two-way coupling.

\begin{rem}[Transfer of dual vectors]
   In this paper, we focus on the transfer of \textit{primal vectors} between the spaces $V_H$ and $V_L$.
   However, we note that the operators $R$ and $P$ defined above can also be used to define transfer operators $R^* : V_H^* \to V_L^*$ and $P^* : V_L^* \to V_H^*$ between the corresponding dual spaces.
   Letting $M_H$ and $M_L$ denote the Riesz identification of a primal vector with its associated dual vector, we briefly discuss two possible definitions for these transfer operators.
   The first is given by
   \[
      R^*_1 = M_L R M_H^{-1}, \qquad\qquad P^*_1 = M_H P M_L^{-1},
   \]
   while the second is given by
   \[
      R^*_2 = P^T, \qquad\qquad P^*_2 = R^T.
   \]
   It is easy to see that in both cases $P^* R^* = I$, and both sets of operators satisfy constant preservation and conservation properties.
   The operators are illustrated by the following diagrams:
   \[
   \begin{tikzcd}[column sep=1cm,row sep=1cm]
      V_H \rar[shift left=0.85]{R} \dar[swap]{M_H} & \lar[shift left=0.85]{P} \dar{M_L} V_L \\
      V_H^* \rar[shift left=0.85]{R^*_1} & \lar[shift left=0.85]{P^*_1} V_L^*
   \end{tikzcd}
   \qquad
   \begin{tikzcd}[column sep=1cm,row sep=1cm]
      V_H \rar[shift left=0.85]{R} \dar[swap]{R^T M_L R} & \lar[shift left=0.85]{P} \dar{M_L} V_L \\
      V_H^* \rar[shift left=0.85]{R^*_2} & \lar[shift left=0.85]{P^*_2} V_L^*
   \end{tikzcd}
   \]
\end{rem}

\subsection{High-order and low-order refined mappings}

As a canonical example of the transfer operators $R$ and $P$, we consider the so-called ``low-order refined transfer.''
Let $\Omega \subseteq \mathbb{R}^d$, $d=1,2,\text{ or }3$ denote a spatial domain, and let $\mathcal{T}_H$ denote the computational mesh.
Let $V = L^2(\Omega)$, where $(u,\,v) = \int_\Omega u v \, dx$ denotes the standard $L^2$ inner product.
Let $V_H$ be a high-order finite element space, whose elements are piecewise polynomials of degree $p$.
The \textit{high-order space} $V_H$ can be either a continuous or discontinuous space (e.g.\ $V_H$ is either an $H^1$ or $L^2$ finite element space).
The \textit{low-order refined} space $V_L$ is a finite element space with polynomial degree $q \leq p$, defined on a mesh $\mathcal{T}_L$, obtained by refining the high-order mesh, $\mathcal{T}_H$, $n$ times in each dimension.
An illustration of these spaces is given in \Cref{fig:ho-lor-mesh}.
We remark that such low-order refined spaces has been used extensively in the context of preconditioning (with $R=P=I$), where the spectral equivalence of the mass and stiffness matrices defined on $V_L$ and $V_H$ is often referred to as the finite element method--spectral element method (FEM--SEM) equivalence \cite{Casarin1997,Canuto1994,Canuto2010}.

\begin{figure}
   \newlength{\fw}
   \ifsiam\setlength{\fw}{0.3\linewidth}\else\setlength{\fw}{0.3\linewidth}\fi
   \hspace{0.2\fw}
   \begin{minipage}{\fw}
      \centering
      \includegraphics[width=\linewidth]{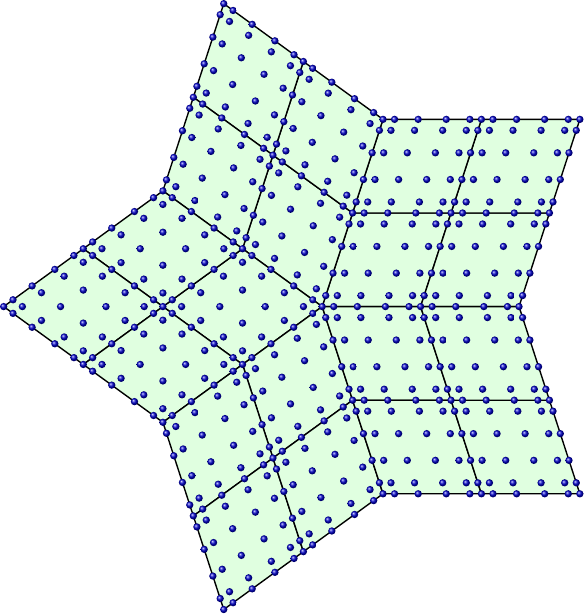}
      \[ V_H \]
   \end{minipage}
   \hspace{1in}
   \begin{minipage}{\fw}
      \centering
      \includegraphics[width=\linewidth]{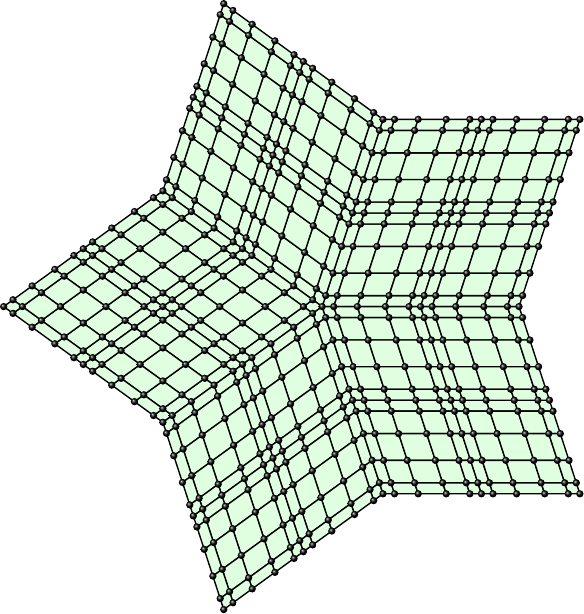}
      \[ V_L \]
   \end{minipage}
   \hspace{0.2\fw}

   \caption{
      Illustration of high-order ($V_H$) and low-order refined ($V_L$) finite element spaces.
      The high-order space is defined on a course mesh with polynomial degree $p=5$, indicated by the blue nodal points (left panel).
      The low-order space is defined on a mesh obtained by subdividing each coarse element into subelements with Gauss--Lobatto points as vertices (right panel).
   }
   \label{fig:ho-lor-mesh}
\end{figure}

We require that the mesh $\mathcal{T}_L$ be sufficiently refined so that the number of degrees of freedom in element of $V_H$ is less than the total number of $V_L$ degrees of freedom in that element.
While $V_L$ can be chosen to be either a continuous or discontinuous space, it is typically more computationally efficient if the low-order space is discontinuous, and we will make this assumption in many of the examples below.
We next show that under these assumptions (with $V_L$ discontinuous) the orthogonality assumption \eqref{eq:VL} is satisfied, namely $V_H \cap V_L^\perp = \{ 0 \}$, cf.\ \cite{Chihara1978},
and thus the statements of \Cref{thm:abstract-properties} hold.

\begin{lem} \label{lem:orthogonality}
   Let $u(x)$ be a polynomial of degree $p$ defined on $[-1,1]$.
   Decompose the interval $[-1,1]$ into $n$ subintervals $[a_i,a_{i+1}]$.
   Furthermore, suppose that for all such subintervals, we have
   \[
      \int_{a_i}^{a_{i+1}} u(x) Q(x) \, dx = 0 \qquad \deg(Q) \leq q,
   \]
   where
   \begin{equation} \label{eq:nqp}
     n (q+1) \geq p+1
   \end{equation}
   Then, $u(x)$ is identically zero.
\end{lem}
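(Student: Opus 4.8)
The plan is to argue by contradiction: suppose $u \not\equiv 0$. Since $u$ is a polynomial of degree $p$, it vanishes at no more than $p$ points of $[-1,1]$, and in particular it cannot vanish identically on any subinterval $[a_i,a_{i+1}]$. The strategy is to show that the orthogonality hypothesis forces $u$ to accumulate strictly more than $p$ distinct roots across the $n$ subintervals, which is the desired contradiction. The counting condition \eqref{eq:nqp} is precisely what makes this global root count exceed $p$.

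The heart of the argument is a local sign-change estimate, which is the classical tool underlying the roots of orthogonal polynomials (cf.\ \cite{Chihara1978}). On a fixed subinterval $[a_i,a_{i+1}]$ I claim that if the restriction of $u$ is orthogonal to every polynomial of degree at most $q$, then $u$ must change sign at least $q+1$ times in the open interval $(a_i,a_{i+1})$. To see this, suppose instead that $u$ changes sign only at points $c_1 < \cdots < c_m$ with $m \leq q$, and set $Q(x) = \prod_{j=1}^m (x - c_j)$, a polynomial of degree $m \leq q$. By construction $Q$ changes sign at exactly the same points as $u$, so the product $u\,Q$ has a single sign throughout $(a_i,a_{i+1})$ and is not identically zero; hence $\int_{a_i}^{a_{i+1}} u\,Q\,dx \neq 0$, contradicting the hypothesis. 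Since a sign change forces a root, $u$ therefore has at least $q+1$ distinct roots in the interior of each subinterval.

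Finally, I would assemble the global count. The open subintervals $(a_i,a_{i+1})$ are pairwise disjoint, so roots produced on different subintervals are automatically distinct. Summing the local bound over all $n$ subintervals yields at least $n(q+1)$ distinct roots of $u$ in $[-1,1]$, and by \eqref{eq:nqp} this is at least $p+1$. This contradicts the fact that a nonzero polynomial of degree $p$ has at most $p$ roots, so $u \equiv 0$.

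The main obstacle, and the only step requiring genuine care, is the local sign-change estimate. One must count only sign changes (equivalently, roots of odd multiplicity) rather than arbitrary roots, since it is the sign change that makes the test product $u\,Q$ one-signed and thus forces $\int u\,Q \neq 0$. One must also confine these sign changes to the \emph{open} interior $(a_i,a_{i+1})$, so that the disjointness of the subintervals guarantees the local root counts add up without any double counting at the shared endpoints $a_i$.
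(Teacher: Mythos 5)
Your proof is correct and takes essentially the same route as the paper's: both arguments construct $Q$ as the product of linear factors at the sign-change points of $u$ (equivalently, its odd-multiplicity zeros) inside each open subinterval, derive a contradiction from the fact that $u\,Q$ is then one-signed and nonzero yet integrates to zero, and conclude by counting at least $n(q+1) \geq p+1$ roots across the disjoint subintervals. The only cosmetic difference is that you frame the whole argument as a single global contradiction, while the paper states the local step as ``either $u \equiv 0$ or $m \geq q+1$'' before doing the same root count.
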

\begin{proof}
   Fix one subinterval $[a_i, a_{i+1}]$.
   Let $x_1, x_2, \ldots, x_p$ denote the zeros of $u$, ordered such that the first $m$ zeros are those of odd multiplicity contained in $(a_i, a_{i+1})$, and the remaining $p+1-m$ zeros are either of even multiplicity, or lie outside of $(a_i, a_{i+1})$.
   We claim that $m \geq q + 1$.
   Suppose to the contrary that $m \leq q$, and define the polynomial $Q(x)$ by
   \[
      Q(x) = \prod_{i=1}^{m} (x - x_i).
   \]
   Then, by assumption,
   \[
      \int_{a_i}^{a_{i+1}} u(x) Q(x) \, dx = 0.
   \]
   Note that $u(x)Q(x)$ has only zeros of even multiplicity, and so it does not change sign on $(a_i, a_{i+1})$.
   Therefore, either $u \equiv 0$ or else we obtain a contradiction, and conclude that $m \geq q + 1$.
   In the latter case, we see that $u$ has at least $q+1$ zeros in each interval, and so $u$ has at least $n (q+1) \geq p+1$ zeros in $[-1,1]$, implying that $u \equiv 0$.
\end{proof}

\begin{rem} \label{rem:mapping}
   The above theorem can be generalized to the $d$-dimensional cube $[-1,1]^d$ using a tensor-product argument.
   The same conclusion holds if the integral is weighted with a separable tensor-product weight.
   In particular, if $V_H$ is a degree-$p$ finite element spaces defined on a mesh consisting of affine tensor-product elements, and $V_L$ is a degree-$q$ finite element space defined on a mesh refined $n$ times in each dimension, then $V_H \cap V_L^\perp = \{ 0 \}$ if $n (q+1) \geq p+1$.
   In \Cref{sec:numerical}, we numerically study the generalization to non-affine meshes, curved meshes, and simplex elements.
\end{rem}

Since any constant function belongs to both $V_L$ and $V_H$, the transfer operators $R$ and $P$ preserve constants, and by taking $1_L = 1_S = 1$, the constant functions, we have the following conservation properties from \eqref{eq:conservation}:
\[
   \int_\Omega R(u_H) \, dx = \int_\Omega u_H \, dx
   \qquad\text{and}\qquad
   \int_\Omega P(v_L) \, dx = \int_\Omega v_L \, dx.
\]
When $V_H$ is discontinuous, $R$ and $P$ preserve piecewise-constants on $\mathcal{T}_H$, and we get the stronger local conservation properties
\[
   \int_{e_H} R(u_H) \, dx = \int_{e_H} u_H \, dx
   \qquad\text{and}\qquad
   \int_{e_H} P(v_L) \, dx = \int_{e_H} v_L \, dx.
\]
for any element $e_H \in \mathcal{T}_H$.

\begin{rem}[Curved meshes]
   In many applications, the high-order space $V_H$ is defined on a mesh with curved elements.
   If the low-order refined space $V_L$ is also defined on a mesh with the same curved elements, then the above analysis holds.
   However, it is also practical for the low-order refined space $V_L$ to be defined on a straight-sided low-order mesh.
   In this case, the areas and volumes of the mesh elements are not the same, and so one can have conservation or constant preservation but not both \cite{Anderson2014}.
   Our default option is to choose the former, which results in a second-order error in the mass conservation, see \Cref{sec:curved} for numerical results.
\end{rem}

\begin{rem}[AMR derefinement]
Suppose that $V_L$ is obtained from the finite element space $V_H$ through an adaptive mesh refinement procedure.
This procedure can include non-conforming refinement (i.e.\ with hanging nodes \cite{Cerveny2019}).
Note that $V_H \subseteq V_L$, and so $R$ is given by the natural injection.
In this case, the $P$ operator can be used to \textit{derefine} functions defined on the adaptively refined mesh, see \Cref{sec:amr} for numerical results.
\end{rem}

In the case of high-order and low-order refined finite element spaces, the transfer operators $R$ and $P$ can be expressed naturally in terms of the low-order and mixed mass matrices.
Let $M_L$ denote the low-order mass matrix, i.e.,
$$
(M_L)_{ij} = \int_\Omega \psi^L_i \psi^L_j,
$$
where $\{\psi^L_j\}$ are the basis functions for $V_L$, and let $M_{LH}$ denote the mixed mass matrix, i.e.
$$
(M_{LH})_{ik} = \int_\Omega \psi^L_i \psi^H_k,
$$
where $\{\psi^H_k\}$ are the basis function for $V_H$.
Then, by definition \eqref{eq:R}, the operator $R : V_H \to V_L$ can be written in matrix form
(operating on the vectors of degrees of freedom in $V_H$ and $V_L$) as
\begin{equation} \label{eq:Rmat}
   R = M_L^{-1} M_{LH}.
\end{equation}
If $V_L$ is a discontinuous space, then $M_L$ is block-diagonal, and so $M_L^{-1}$ can be computed efficiently element-by-element.
In the piecewise-constant LOR case, $M_L$ is a diagonal matrix, and can be inverted trivially.

Since \eqref{eq:VL} implies that $R$ is injective, the associated matrix has full column rank, and so $R^T M_L R$ is invertible.
Then, the operator $P : V_L \to V_H$ defined by \eqref{eq:P} has the matrix representation
\begin{equation} \label{eq:Pmat}
   P = (R^T M_L R)^{-1} R^T M_L.
\end{equation}
Note that this explicit matrix representation immediately gives $PR = I$ (cf.\ property 2 of \Cref{thm:abstract-properties}).
If the high-order space is also discontinuous, then $P$ can be computed efficiently element-by-element.
Otherwise, a global solve is required to compute the action of $P$.

\begin{rem}[Alternative transfer operators]
   Given a restriction operator $R : V_L \to V_H$, it is possible to define several alternative prolongation operators $P : V_L \to V_H$.
   In this work, we choose $P$ to be given by \eqref{eq:P} since it is a conservative, constant-preserving left-inverse of $R$.
   One other natural choice of prolongation operator is the $L^2$ projection, $P' = M_H^{-1} M_{HL} = M_H^{-1} R^T M_L$, which is conservative and constant-preserving; however, $P'$ fails to be a left-inverse for $R$.
   Similarly, the prolongation operator given by $P'' = (R^T R)^{-1} R^T$ is a left-inverse for $R$, but in general fails to be conservative.
   Pointwise nodal interpolation is also commonly used, particularly in the context of FEM--SEM preconditioning, but this operation is not conservative and depends on the choice of nodal interpolation points for the high-order space.

   Another option similar to what we propose in this paper is to define $\hat{P} : V_L \to V_H$ as the $L^2$ projection, and then define $\hat{R} : V_H \to V_L$ to be its conservative right-inverse $\hat{R} = R (R^T M_L R)^{-1} M_H$.
   This definition gives rise to operators with properties similar to those enumerated in \Cref{thm:abstract-properties}.
\end{rem}

\subsubsection{Preconditioning the \texorpdfstring{$P$}{P} operator}
\label{sec:preconditioning-P}

In the case where the high-order space $V_H$ is continuous, a globally coupled solve is required to compute the action of $P = (R^T M_L R)^{-1} R^T M_L$.
The following result establishes that $R^T M_L R$ is spectrally equivalent to the high-order mass matrix,
$$
(M_H)_{lk} = \int_\Omega \psi^H_l \psi^H_k,
$$
and so any effective preconditioner for $M_H$ can be used to precondition the inversion of the $R^T M_L R$ operator in the action of $P$.

\begin{prop} \label{prop:preconditioning-P}
   Let $V_L$ be a low-order refined finite element space, let $M_L$ and $M_H$ denote the low-order and high-order mass matrices, respectively, and let $A = R^T M_L R$, where $R$ is defined by \eqref{eq:R}.
   Then, $M_H^{-1} A$ is uniformly well-conditioned.
\end{prop}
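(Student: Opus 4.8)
The plan is to recast the generalized eigenvalue problem $A\mathbf{u} = \lambda M_H \mathbf{u}$ as a statement about the $L^2$-orthogonal projection onto $V_L$. First I would simplify $A$: since $R = M_L^{-1} M_{LH}$,
\[
   A = R^T M_L R = M_{LH}^T M_L^{-1} M_{LH}.
\]
For a coefficient vector $\mathbf{u}$ representing $u_H \in V_H$, the vector $M_{LH}\mathbf{u}$ has entries $\int_\Omega \psi_i^L u_H$, which is exactly $M_L$ times the coefficient vector of $R u_H$, the $L^2$ projection of $u_H$ onto $V_L$ from definition \eqref{eq:R}. Writing $\Pi_L : V \to V_L$ for the full $L^2$ projection, so that $R = \Pi_L|_{V_H}$, substitution yields the key identity
\[
   \mathbf{u}^T A \mathbf{u} = \| R u_H \|^2 = \| \Pi_L u_H \|^2, \qquad \mathbf{u}^T M_H \mathbf{u} = \| u_H \|^2.
\]
Both $A$ and $M_H$ are symmetric positive definite ($M_{LH}$ has full column rank by injectivity of $R$), so the problem is well-posed and each generalized eigenvalue equals the Rayleigh quotient $\|\Pi_L u_H\|^2 / \|u_H\|^2$. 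It then suffices to bound this quotient above and below by constants independent of the mesh.

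The upper bound is immediate: $\Pi_L$ is an orthogonal projection, so $\|\Pi_L u_H\| \le \|u_H\|$ and every eigenvalue is at most $1$. The substance is the lower bound, i.e.\ a uniform $c > 0$ with $\|\Pi_L u_H\| \ge c \|u_H\|$ for all $u_H \in V_H$. Because we take $V_L$ discontinuous, $\Pi_L$ acts element-by-element on $\mathcal{T}_H$, so both norms split over elements and
\[
   \frac{\|\Pi_L u_H\|^2}{\|u_H\|^2}
   = \frac{\sum_e \|\Pi_L^e (u_H|_e)\|_{L^2(e)}^2}{\sum_e \|u_H|_e\|_{L^2(e)}^2}
   \ge \min_e \frac{\|\Pi_L^e (u_H|_e)\|_{L^2(e)}^2}{\|u_H|_e\|_{L^2(e)}^2},
\]
where $\Pi_L^e$ is the local projection. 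On affine tensor-product elements an affine change of variables maps each $e$ to the reference cube $\hat K$ with constant Jacobian, which cancels between numerator and denominator; hence each local quotient equals the corresponding reference-element quotient, and the global lower bound reduces to the single constant
\[
   c^2 = \inf_{\substack{v \in V_H(\hat K) \\ v \neq 0}} \frac{\|\hat\Pi_L v\|^2}{\|v\|^2}.
\]

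It remains to argue $c > 0$. On $\hat K$ the spaces are finite-dimensional, so the infimum is attained; it is positive precisely when no nonzero $v \in V_H(\hat K)$ lies in $V_L(\hat K)^\perp$, which is exactly the orthogonality condition $V_H \cap V_L^\perp = \{0\}$ guaranteed by \Cref{lem:orthogonality} and \Cref{rem:mapping} under the refinement assumption $n(q+1) \ge p+1$. Since $c$ depends only on the reference configuration and not on the mesh, the condition number of $M_H^{-1} A$ satisfies $\lambda_{\max}/\lambda_{\min} \le 1/c^2$ uniformly in the mesh size $h$, which establishes the stated result.

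The step I expect to be the main obstacle is the \emph{quality} of this lower bound as the polynomial degree grows. The elementary reduction above delivers $h$-independence but only a constant $c = c(p,q,n)$ that could a priori degrade with $p$, whereas a useful preconditioning statement wants robustness in $p$ as well. To secure that, I would interpret $c^2$ as the squared cosine of the maximal principal angle between the subspaces $V_H$ and $V_L$ on $\hat K$, and control it using the finite-element/spectral-element (FEM--SEM) mass-matrix equivalence for low-order refined spaces built on Gauss--Lobatto points \cite{Canuto1994,Casarin1997,Canuto2010}; the $p$-uniform spectral equivalence of $M_L$ and $M_H$ provided there is exactly what keeps this angle bounded away from $\pi/2$ independently of $p$.
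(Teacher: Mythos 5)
Your overall strategy is the same as the paper's: the upper bound is immediate because $R$ is the restriction of the $L^2$ projection $\Pi_L$, so $\|Ru_H\|_0\le\|u_H\|_0$, and everything reduces to a $p$-uniform lower bound $\|Ru_H\|_0\ge\alpha\|u_H\|_0$, which the paper outsources to \Cref{lem:R-lower-bound}. Your reference-element reduction correctly explains the $h$-independence (the paper leaves this implicit), and you correctly diagnose that the real content is preventing the constant from degrading as $p\to\infty$.

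The one genuine gap is in your final paragraph, where you propose to close the $p$-uniformity question using the FEM--SEM \emph{mass-matrix} equivalence. That equivalence (as in \cite{Canuto1994,Casarin1997}) compares a degree-$p$ polynomial with its continuous piecewise-\emph{linear nodal interpolant} on the Gauss--Lobatto submesh; but the subspace angle you need to control is between $V_H$ and the \emph{piecewise-constant} space, and the relevant map is cell averaging ($L^2$ projection), not nodal interpolation. The mass-matrix equivalence does not directly bound $\|\Pi_L u_H\|_0$ from below when $\Pi_L$ is the averaging operator. The paper bridges this with a dedicated result (\Cref{prop:lor-equivalence}): for a polynomial $v$ of degree $n-1$,
\[
   \|v\|_0^2 \sim \sum_{i=1}^n \frac{1}{h_i}\Bigl(\int_{e_i} v\Bigr)^2,
\]
which is obtained by applying Canuto's $H^1$-\emph{seminorm} interpolation equivalence to an antiderivative $w$ of $v$ (so that $w(x_i)-w(x_{i-1})=\int_{e_i}v$), combined with the Gauss--Lobatto weight and spacing asymptotics of \Cref{prop:wh-estimates}. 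Without this antiderivative device (or an equivalent substitute), your plan stalls exactly at the step you flagged as the main obstacle; with it, your argument matches the paper's.
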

\begin{proof}
   Note that the operator $R$ is given by the restriction to $V_H$ of the $L^2$ projection onto $V_L$.
   Therefore, $R$ is a projection, and so, for any $v_H \in V_H$, $\| R v_H \|_0 \leq \| v_H \|_0$. Hence
   \[
      v_H^T R^T M_L R v_H = \| R v_H \|_0^2 \leq \| v_H \|_0^2.
   \]
   Since $R$ is injective, we have $\| R v_H \| \geq \alpha \| v_H \|_0$, where the lower bound $\alpha$ is estimated in \Cref{lem:R-lower-bound}, and shown to be independent of polynomial degree in the context of piecewise constant low-order refined space $V_L$.
   Therefore, $R^T M_L R \sim M_H$, and so the condition number of $M_H^{-1} R^T M_L R$ is uniformly bounded.
\end{proof}

In the special case of tensor-product meshes (i.e.\ with quadrilateral or hexahedral elements), the high-order mass matrix is spectrally equivalent to its diagonal, independent of mesh size $h$ and polynomial degree $p$, see \cite{Canuto1994,Teukolsky2015} and \Cref{sec:ho-lor-equivalence}.
As a consequence, the above proposition implies that the operator $A = R^T M_L R$ is well-preconditioned by the diagonal $D$ of the high-order mass matrix in this case, enabling efficient and readily available diagonal preconditioning.

\begin{cor} \label{cor:preconditioning-P}
   Consider tensor-product finite element spaces $V_L$ and $V_H$ with Gauss--Lobatto nodal basis functions.
   Let $D$ denote the diagonal of the high-order mass matrix $M_H$, and let $A = R^T M_L R$ as in \Cref{prop:preconditioning-P}.
   Then, $D^{-1} A$ is uniformly well-conditioned.
\end{cor}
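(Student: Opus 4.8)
The plan is to argue by transitivity of spectral equivalence. \Cref{prop:preconditioning-P} already establishes $A = R^T M_L R \sim M_H$ with equivalence constants independent of the mesh size $h$ and the polynomial degree $p$. Both $A$ and $D$ are symmetric positive definite ($A$ because $M_L$ is SPD and $R$ has full column rank, $D$ because the diagonal entries $\int_\Omega (\psi^H_i)^2\,dx$ of $M_H$ are strictly positive), so it suffices to establish the remaining link $M_H \sim D$, again with constants independent of $h$ and $p$. Composing the two relations yields $A \sim D$, whence the generalized eigenvalues of the pencil $(A,D)$ lie in a fixed interval $[c_1,c_2] \subset (0,\infty)$ and $D^{-1}A$ is uniformly well-conditioned.

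For the link $M_H \sim D$ I would exploit the tensor-product structure of $V_H$. On an affine tensor-product element the mass matrix factors as $M_H^e = J\,(\widehat{M} \otimes \cdots \otimes \widehat{M})$ with $d$ factors, where $J > 0$ is the constant Jacobian determinant and $\widehat{M}$ is the one-dimensional reference mass matrix in the Gauss--Lobatto nodal basis. Since the diagonal of a tensor product is the tensor product of the diagonals, the corresponding element diagonal factors as $D^e = J\,(\widehat{D} \otimes \cdots \otimes \widehat{D})$, where $\widehat{D}$ is the diagonal of $\widehat{M}$, i.e.\ the vector of Gauss--Lobatto quadrature weights. Spectral equivalence of SPD pencils is preserved under tensor products (the generalized eigenvalues of $(\widehat{M} \otimes \cdots, \widehat{D} \otimes \cdots)$ are products of those of $(\widehat{M},\widehat{D})$), so the whole question reduces to the one-dimensional statement $\widehat{M} \sim \widehat{D}$ with constants independent of $p$.

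The one-dimensional estimate $\widehat{M} \sim \widehat{D}$ is the classical mass-lumping result for the Gauss--Lobatto basis: the diagonal $\widehat{D}$ is exactly the Gauss--Lobatto quadrature approximation of $\widehat{M}$, and the ratio between the exact $L^2$ inner product of a degree-$p$ polynomial and its Gauss--Lobatto quadrature approximation is bounded above and below by constants independent of $p$ (see \cite{Canuto1994,Teukolsky2015} and \Cref{sec:ho-lor-equivalence}). I expect this degree-independent one-dimensional bound to be the main obstacle, as it is precisely where the special structure of the Gauss--Lobatto nodes is used; the reduction from $d$ dimensions and the transitivity argument are routine by comparison. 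Finally, the element-level equivalence transfers verbatim to the global matrices in the continuous case, since both $M_H$ and its diagonal $D$ assemble additively from the element contributions, so that $x^T M_H x = \sum_e (x^e)^T M_H^e x^e$ and $x^T D x = \sum_e (x^e)^T D^e x^e$; summing the element Rayleigh quotients then gives the global equivalence $M_H \sim D$ and completes the proof.
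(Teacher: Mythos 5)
Your proposal is correct and follows essentially the same route the paper takes: \Cref{prop:preconditioning-P} gives $A \sim M_H$, and the Gauss--Lobatto mass-lumping equivalence $M_H \sim D$ (uniform in $h$ and $p$, via the tensor-product reduction to the 1D result of \cite{Canuto1994,Teukolsky2015} discussed in \Cref{sec:ho-lor-equivalence}) closes the argument by transitivity. One small slip: the diagonal of $\widehat{M}$ has entries $\int \psi_i^2\,dx$ with $\psi_i^2$ of degree $2p$, so it is \emph{not} exactly the Gauss--Lobatto quadrature approximation $\operatorname{diag}(w_i)$ of $\widehat{M}$; however, both diagonals are uniformly spectrally equivalent to $\widehat{M}$ (and hence to each other), so the conclusion is unaffected.
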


\subsubsection{Conservation of multiple fields}

In certain contexts, it may be desirable to conservatively transfer multiple fields.
For example, suppose that density and velocity are represented in the high-order spaces as $\rho_H$ and $u_H$, respectively.
We wish to compute low-order approximations, $\rho_L$ and $u_L$ that are both mass and momentum conserving.
In many applications, the density $\rho_H \in V_{\rho,H}$ is discontinuous, and the velocity $u_H \in V_{u,H}$ is continuous, see e.g. \cite{Anderson2018}.
Let $R_\rho : V_{\rho,H} \to V_{\rho,L}$ denote the restriction operator defined above in terms of the standard $L^2$ inner product, and let $\rho_L = R(\rho_H)$.
Then, mass conservation follows from \eqref{eq:R}.

In order to define the momentum-conserving transfer operator for velocity, we consider density-weighted inner products on the velocity spaces:
\[
   (u_L, v_L)_u = \int_\Omega u_L v_L \rho_L \, dx
   \qquad\text{and}\qquad
   (u_H, v_L)_u = \int_\Omega u_L v_L \rho_H \, dx.
\]
Having first computed $\rho_L = R(\rho_H)$, we can compute the density-weighted transfer operator $R_\rho$ using these weighted inner products.
Then, letting $u_L = R_\rho(u_H)$, we have, by \eqref{eq:R},
\[
   \int_\Omega u_L \rho_L \, dx
   = (u_L, 1)_u
   = (u_H, 1)_u
   = \int_\Omega u_H \rho_H \, dx,
\]
proving conservation of momentum.
If the low-order space is discontinuous, then the computation of $R_\rho$ requires only the inversion of the block-diagonal density-weighted mass matrix.

A similar procedure can be used to map from the low-order spaces to the high-order spaces.
First, given the low-order density $\rho_L$, the high-order density $\rho_H = P(\rho_L)$ is computed using the prolongation operator $P$ defined in terms of the unweighted $L^2$ inner product.
Assuming that the density spaces $V_{\rho,L}$ are $V_{\rho,H}$ are discontinuous, we see that $P = (R^T M_L R)^{-1} R^T M_L$ can be computed element-by-element.
Once $\rho_H = P(\rho_L)$ is computed, we can compute the density-weighted prolongation operator $P_\rho$ in terms of the density-weighted inner products.
Since the high-order velocity space $V_{u,H}$ is typically continuous, the matrix $R_{\rho}^T M_L R_{\rho}$ is not block-diagonal, and so the corresponding prolongation operator cannot be computed element-by-element, and instead $R_{\rho}^T M_L R_{\rho}$ may be preconditioned using the results of \Cref{prop:preconditioning-P} and \Cref{cor:preconditioning-P}.

\section{Accuracy of the mappings}
\label{sec:accuracy}

In this section we study the accuracy of the transfer operators \eqref{eq:R} and \eqref{eq:P}.
$R$ is a standard $L^2$ projection operator with respect to the inner product on $V_L$, and its accuracy properties are well understood.
Therefore, our focus is on the accuracy of the prolongation operator $P$.

Let $f \in V$ be a given function, and suppose that $f$ is approximated by $f_H \in V_H$, i.e.\ $f = f_H + e_H$, for some error term $e_H$.
Let $f_L = \Pi_L f$, where $\Pi_L$ denotes $L^2$ projection onto $V_L$.
We are interested in the accuracy of $P f_L \in V_H$ compared with $f_H$.
We begin with a general result, estimating the accuracy of $P f_L$ in terms of a lower bound for $R$.

\begin{lem}
   Let $f \in V$ be given, and let $f_H \in V_H$.
   Define $f_L \in V_L$ by $f_L = \Pi_L f$.
   Then,
   \[
      \| P f_L - f \| \leq (1 + \alpha^{-1}) \| f - f_H \|,
   \]
   where $\alpha$ gives a lower bound for the operator $R$, i.e.\ $\| R v \| \geq \alpha \| v \|$.
\end{lem}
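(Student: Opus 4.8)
The plan is to reduce everything to a triangle inequality anchored at the intermediate approximant $f_H$, and then to control the single term $\|Pf_L - f_H\|$ using the structural properties of $P$ established in \Cref{thm:abstract-properties}. First I would write
\[
   \| P f_L - f \| \leq \| P f_L - f_H \| + \| f_H - f \|,
\]
so that it remains only to show $\| P f_L - f_H \| \leq \alpha^{-1} \| f - f_H \|$, after which the claimed bound $(1 + \alpha^{-1}) \| f - f_H \|$ follows immediately.

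For the remaining term, the key algebraic observation is that $f_H \in V_H$ satisfies $f_H = P R f_H$, since $PR = I$ by property 2 of \Cref{thm:abstract-properties}. Using linearity of $P$, this gives $P f_L - f_H = P(f_L - R f_H)$. I would then identify $R$ with the $L^2$ projection onto $V_L$ restricted to $V_H$: indeed, \eqref{eq:R} says exactly that $R u_H$ is the $L^2$ projection $\Pi_L u_H$ for $u_H \in V_H$, so $R f_H = \Pi_L f_H$. Combined with $f_L = \Pi_L f$ and linearity of $\Pi_L$, the argument telescopes neatly to
\[
   f_L - R f_H = \Pi_L f - \Pi_L f_H = \Pi_L (f - f_H),
\]
and since $\Pi_L$ is an orthogonal projection it is a contraction, giving $\| f_L - R f_H \| \leq \| f - f_H \|$.

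The main point — and the step I expect to require the most care — is bounding the operator norm of $P$ by $\alpha^{-1}$. Here I would invoke property 3, namely $P = R|_S^{-1} Q$ with $Q = RP$, together with the fact (noted in the proof of property 2) that $Q$ is the \emph{orthogonal} projection of $V_L$ onto $S$, hence a contraction. The lower bound hypothesis $\| R v \| \geq \alpha \| v \|$ then controls the inverse: for any $v_L \in V_L$, since $P v_L \in V_H$ we have $\alpha \| P v_L \| \leq \| R P v_L \| = \| Q v_L \| \leq \| v_L \|$, so $\| P v_L \| \leq \alpha^{-1} \| v_L \|$. Applying this with $v_L = f_L - R f_H = \Pi_L(f - f_H)$ yields $\| P f_L - f_H \| \leq \alpha^{-1} \| f - f_H \|$, and substituting into the triangle inequality completes the proof. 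The delicate part is genuinely this norm estimate: it hinges on recognizing that $RP$ is orthogonal (so that its factor contributes no constant) and that the only loss of a constant comes through inverting $R$ on $S$, which is precisely where $\alpha^{-1}$ enters.
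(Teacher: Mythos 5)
Your proof is correct and follows essentially the same route as the paper's: both identify $Pf_L - f_H = P\Pi_L(f - f_H)$ via $Rf_H = \Pi_L f_H$ and $PR = I$, and both obtain $\|P v_L\| \leq \alpha^{-1}\|v_L\|$ by combining the contraction property of $Q = RP$ with the lower bound on $R$. If anything, you are slightly more careful than the paper in noting explicitly that $Q$ is an \emph{orthogonal} projection (the paper only says ``projection,'' which alone would not give $\|RPu_L\| \leq \|u_L\|$), so no changes are needed.
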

\begin{proof}
   First, note that $\Pi_L f = \Pi_L f_H + \Pi_L e_H$, and so $P f_L = P \Pi_L f_H + P \Pi_L e_H$.
   Recall from the definition \eqref{eq:P}, that $P \Pi_L f_H$ is defined by
   \[
      (P \Pi_L f_H, R u_H) = (\Pi_L f_H, R u_H) \qquad \text{for all $u_H\in u_H$.}
   \]
   By definition of the $L^2$ projection,
   $
      (\Pi_L f_H, R u_H) = (f_H, R u_H),
   $
   and so by \eqref{eq:R}
   \[
      (R P \Pi_L f_H, R u_H) = (R f_H, R u_H),
   \]
   implying (since $R$ is injective) that $P \Pi_L f_H = f_H$.
   Therefore
   \begin{equation} \label{eq:err}
      \| P f_L - f \| = \| f_H + P \Pi_L e_H  - f \| \leq \| P \Pi_L e_H \| + \| e_H \|,
   \end{equation}
   and so it remains to estimate the term $\| P \Pi_L e_H \|$.
   Since $\Pi_L$ is a projection, we have $\| \Pi_L e_H \| \leq \| e_H \|.$
   Furthermore $RP$ is a projection by \Cref{thm:abstract-properties}, and so  $\| R P u_L \| \leq \| u_L \|$.
   Since the operator $R$ is injective, we have $\| R u_H \| \geq \alpha \| u_H \|$ for some $\alpha$, and hence
   \[
      \| P u_L \| \leq \frac{1}{\alpha} \| u_L \|.
   \]
   Combining this estimate with \eqref{eq:err}, we have
   \[
      \| P f_L - f \| \leq \| e_H \| + \| P \Pi_L e_H \| \leq \| e_H \| + \frac{1}{\alpha} \|e_H\| = (1 + \alpha^{-1}) \| e_H \|. \siamqedhere
   \]
\end{proof}

Now, we consider the specific case of high-order to low-order refined transfer defined on meshes with tensor-product elements.
Suppose $V = L^2(\Omega)$, and $V_H$ is a finite element space with polynomial degree $p$ and mesh size $h$, and $V_L$ is a low-order refined piecewise constant finite element space.
Then, if $f$ possesses sufficient regularity, we can bound the error term $\| e_H \|$ by $\mathcal{O}(h^{p+1})$.
Additionally, in this case, we have the following lower bound on $R$, whose proof we defer to the following subsections.

\begin{lem} \label{lem:R-lower-bound}
  Let $V_H$ be a high-order finite element space of degree $p$ consisting of affine tensor-product elements, and let $V_L$ be a piecewise constant low-order refined finite element space satisfying $V_H \cap V_L^\perp = \{ 0 \}$.
  Then, the operator $R$ has the lower bound
   \[
      \| R v \| \geq \alpha \| v \| \qquad \text{for all $v \in V_H$,}
   \]
   where the constant $\alpha \sim 1$ is independent of the polynomial degree $p$.
\end{lem}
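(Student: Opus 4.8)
The plan is to exploit the fact that for a piecewise constant low-order refined space $V_L$ the restriction $R$ acts on $V_H$ as the elementwise $L^2$-projection onto constants, so that $(Rv)|_{I_i}$ is simply the average $\bar v_i$ of $v$ over each subinterval $I_i$. By the tensor-product reduction of \Cref{rem:mapping}, it suffices to establish the bound in one dimension on $[-1,1]$ for a degree-$p$ polynomial $v$, with the subintervals $I_i$ graded like a Gauss--Lobatto rule. The orthogonality of the elementwise average then yields the exact identity
\[
   \|v\|^2 - \|Rv\|^2 = \sum_i \int_{I_i} (v - \bar v_i)^2 \, dx,
\]
so that the desired lower bound $\|Rv\|^2 \ge \alpha^2\|v\|^2$ is equivalent to controlling the total oscillation $\sum_i \int_{I_i}(v-\bar v_i)^2$ by a fixed fraction $(1-\alpha^2)$ of $\|v\|^2$, uniformly in $p$.

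Next I would estimate each oscillation term by the sharp one-dimensional Poincar\'e inequality on an interval of length $|I_i|$,
\[
   \int_{I_i}(v-\bar v_i)^2\,dx \le \frac{|I_i|^2}{\pi^2}\int_{I_i}(v')^2\,dx,
\]
reducing the problem to bounding the mesh-weighted energy $\sum_i |I_i|^2\int_{I_i}(v')^2$. The decisive ingredient is a weighted (Legendre) inverse inequality: integrating by parts against the Legendre operator $-\tfrac{d}{dx}[(1-x^2)\tfrac{d}{dx}]$ and using that its eigenvalues on polynomials of degree at most $p$ are $k(k+1) \le p(p+1)$ gives
\[
   \int_{-1}^1 (1-x^2)(v'(x))^2\,dx \le p(p+1)\int_{-1}^1 v^2\,dx.
\]
This $p^2$ growth is exactly compensated by the Gauss--Lobatto grading, for which the local mesh size behaves like $|I_i| \lesssim \sqrt{1-x^2}/p$ in the interior; matching the weight to the grading yields $\sum_i|I_i|^2\int_{I_i}(v')^2 \lesssim p^{-2}\int(1-x^2)(v')^2 \lesssim \|v\|^2$ with a constant independent of $p$.

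The main obstacle is the behaviour near the endpoints $x = \pm 1$, where the Jacobi weight $1-x^2$ degenerates to zero while the Gauss--Lobatto subintervals shrink only to length $\mathcal{O}(p^{-2})$. There the pointwise comparison $|I_i|^2 \lesssim p^{-2}(1-x^2)$ fails, so the weighted inverse inequality cannot by itself absorb the two boundary subintervals, and a crude unweighted Markov estimate on them is too lossy, returning an $\mathcal{O}(1)$ multiple of $\|v\|^2$. I would handle these boundary intervals separately, either through a Hardy-type weighted Poincar\'e estimate valid up to the degenerate endpoint, or by a tailored local analysis of the boundary layer of width $\mathcal{O}(p^{-2})$; for the interior intervals the uniform comparability $\sup_{I_i}(1-x^2) \le 2\inf_{I_i}(1-x^2)$ lets one replace $|I_i|^2$ by the pointwise weight directly.

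Finally, since the combined estimate yields $\sum_i\int_{I_i}(v-\bar v_i)^2 \le C\|v\|^2$ with $C$ independent of $p$, it remains to guarantee $C < 1$ so that $\alpha^2 = 1 - C > 0$. This is secured by taking the low-order refined mesh sufficiently refined: subdividing each Gauss--Lobatto subinterval uniformly by a $p$-independent factor $m$ multiplies every $|I_i|$ by $1/m$, and hence the oscillation bound by $1/m^2$, while preserving the grading. Choosing $m$ large enough drives $C$ below one and yields $\alpha \sim 1$ uniformly in $p$, as claimed.
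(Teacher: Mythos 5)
Your overall strategy --- the Pythagorean split $\|v\|^2=\|Rv\|^2+\sum_i\int_{I_i}(v-\bar v_i)^2$, local Poincar\'e, and the weighted Bernstein inequality $\int_{-1}^1(1-x^2)(v')^2\,dx\le p(p+1)\|v\|_0^2$ balanced against the Gauss--Lobatto grading $|I_i|\sim\sqrt{1-x^2}/p$ --- is genuinely different from the paper's and is sound in the interior of the reference interval. The paper instead writes $\|Rv\|_0^2=\sum_i h_i^{-1}\bigl(\int_{e_i}v\bigr)^2=\sum_i h_i^{-1}\bigl(w(x_i)-w(x_{i-1})\bigr)^2$ for an antiderivative $w$ of $v$, and recognizes the right-hand side as $\|w_h'\|_0^2$ for the piecewise-linear Gauss--Lobatto interpolant $w_h$ of $w$; Canuto's FEM--SEM equivalence $\|w'\|_0\sim\|w_h'\|_0$ (\Cref{prop:norm-equivalence,prop:lor-equivalence,prop:general-equivalence}) then gives the two-sided bound $\|Rv\|_0\sim\|v\|_0$ in one stroke, with no smallness condition and no special treatment of the endpoint cells.

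However, as written your argument has a genuine gap, in two related places. First, the two boundary subintervals are never actually handled: you correctly observe that $|I_i|^2\lesssim p^{-2}(1-x^2)$ fails there because $1-x^2$ degenerates while $|I_i|\sim p^{-2}$, and you name two possible remedies (a Hardy-type estimate, or a boundary-layer analysis) without carrying either out; the one concrete estimate you do consider (unweighted Markov) you yourself concede yields only an $\mathcal{O}(1)$ contribution. Second, and more fundamentally, your method produces $\sum_i\int_{I_i}(v-\bar v_i)^2\le C\|v\|_0^2$ with $C=\mathcal{O}(1)$ but with no control ensuring $C<1$, so you are forced to further subdivide each Gauss--Lobatto cell by an absolute factor $m$ to shrink $C$ below one. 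That proves the lemma only for LOR meshes refined by an extra constant factor beyond the Gauss--Lobatto subdivision, whereas the lemma is invoked throughout the paper (in \Cref{prop:preconditioning-P} and \Cref{thm:P-accuracy}) for the standard LOR space with exactly $n=p+1$ Gauss--Lobatto subintervals per element. Since the statement is false for general subdivisions (the paper's own \Cref{fig:nodes} shows exponential degradation for uniform spacing), the specific Gauss--Lobatto mesh must be covered as is; your argument does not reach it, while the antiderivative/FEM--SEM route does.
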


As a consequence of the above two lemmas, we have the following main accuracy result regarding the prolongation operator $P$.
Informally it states that the range of $P$ has the same approximation properties as the full high-order space, and thus there is
no loss of high-order accuracy from the transfers between the high-order and low-order refined spaces.

\begin{thm}
   \label{thm:P-accuracy}
   Let $V_H$ be a high-order finite element space consisting of affine tensor-product elements, and let $V_L$ be a piecewise constant low-order refined finite element space satisfying $V_H \cap V_L^\perp = \{ 0 \}$.
   Let $f \in V$ be given, and sufficiently regular, such that $f = f_H + e_H$, where $e_H = \mathcal{O}(h^{p+1})$.
   Let $f_L = \Pi_L f$.
   Then,
   \[
      \| P f_L - f \|_0 \lesssim h^{p+1} \| f \|_0,
   \]
   where the implied constant is independent of the polynomial degree of the high-order space.
\end{thm}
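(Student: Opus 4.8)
The plan is to derive the result directly by combining the general accuracy estimate (the immediately preceding, unlabeled lemma) with the polynomial-degree-robust lower bound of \Cref{lem:R-lower-bound}, so that the theorem is essentially a corollary of the two. First I would apply the general accuracy lemma, choosing $f_H$ to be the stated degree-$p$ approximant of $f$ and setting $f_L = \Pi_L f$. This immediately yields
\[
   \| P f_L - f \|_0 \le (1 + \alpha^{-1}) \, \| f - f_H \|_0 = (1 + \alpha^{-1}) \, \| e_H \|_0,
\]
valid for any lower bound $\alpha$ satisfying $\| R v \| \ge \alpha \| v \|$. The theorem then reduces to a single question: whether the prefactor $(1 + \alpha^{-1})$ can be chosen independently of $p$, since the regularity hypothesis already supplies $\| e_H \|_0 = \mathcal{O}(h^{p+1})$.

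To control the prefactor, I would invoke \Cref{lem:R-lower-bound}, which applies in exactly the present setting — affine tensor-product $V_H$ together with piecewise-constant LOR $V_L$ satisfying the orthogonality condition $V_H \cap V_L^\perp = \{0\}$ — and furnishes $\| R v \| \ge \alpha \| v \|$ with $\alpha \sim 1$ and constant independent of the polynomial degree $p$. Substituting this bound makes $(1 + \alpha^{-1})$ a $p$-independent constant, which is the entire qualitative content of the claim. This is also where I expect the only genuine difficulty to lie: the $p$-robustness asserted by the theorem rests wholly on the deferred estimate of \Cref{lem:R-lower-bound}, so any hidden $p$-growth in $\alpha^{-1}$ would propagate verbatim into the final constant. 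Everything else in the argument is a routine assembly of the abstract estimate and the assumed approximation rate.

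Combining the two displays then gives
\[
   \| P f_L - f \|_0 \le (1 + \alpha^{-1}) \, \| e_H \|_0 \lesssim h^{p+1} \| f \|_0,
\]
with a constant independent of $p$, as asserted. The one point I would flag for care is the norm appearing on the right-hand side: in $V = L^2(\Omega)$ the symbol $\| f \|_0$ should be read as shorthand for the norm (or higher-order seminorm) of $f$ that the assumed regularity makes $\| e_H \|_0 = \mathcal{O}(h^{p+1})$ quantify. The essential observation is that the transfer through the low-order refined space introduces no additional dependence on $f$, and no additional dependence on $p$, beyond what is already present in the best-approximation error $e_H$.
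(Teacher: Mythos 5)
Your proposal is correct and follows exactly the paper's route: the paper states \Cref{thm:P-accuracy} as an immediate consequence of the unlabeled accuracy lemma and \Cref{lem:R-lower-bound}, with no separate proof beyond that assembly, which is precisely what you wrote. Your remark about the interpretation of $\| f \|_0$ on the right-hand side is a fair observation about a point the paper also leaves implicit.
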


We now turn our attention to the proof of \Cref{lem:R-lower-bound}.
We begin by enumerating some technical results regarding one-dimensional quadrature rules.

\subsection{1D quadrature analysis}

In this section we derive some estimates for the abscissas $x_i$ and weights $w_i$ of 1D quadrature rules
\[
   \int_{-1}^{1} f(x) dx \approx \sum_i w_i f(x_i)
\]
which we classify to be of either {\em open} or {\em closed} type:
\begin{enumerate}
   \item The \textit{open} rules have $n$ abscissas and weights $\{ (w_i,x_i)\}_{i=1}^n$, with all points being interior to the interval.
   Examples include the Gauss and Chebyshev (Fejer's first) rules.
   \item The \textit{closed} rules have $n+1$ abscissas and weights $\{ (w_i,x_i) \}_{i=0}^n$, which include the points $x_0=-1$ and $x_n=1$.
   Examples include the Gauss--Lobatto and Chebyshev--Lobatto (Clenshaw--Curtis) rules.
\end{enumerate}

We assume that the abscissas are sorted in an increasing order, and that the rule is \textit{symmetric} with respect to the origin, i.e.\ $x_i = -x_{n-i+1}$ and $w_i = w_{n-i+1}$ in the open case, and similar in the closed case.
We will also use superscripts to distinguish between the different quadrature rules, e.g.\ $G$ for Gauss, $C$ for Chebyshev, $GL$ for Gauss--Lobatto, and $CL$ for Chebyshev--Lobatto.

\begin{rem} \label{rem:abscissas}
   Generally, the abscissas of the open rules are the zeros of orthogonal polynomials with certain weights, while the closed abscissas are the zeros of the derivative of that polynomial plus the two endpoints, $\pm 1$.
   Specifically
   \begin{enumerate}
      \item The Gauss points $\{ x_i^G \}_{i=1}^n$ are the zeros of the Legendre polynomials $P_n(x)$ which are orthogonal in $[-1,1]$ with weight $1$.
      \item The Chebyshev points $\{ x_i^C \}_{i=1}^n$ are the zeros of the Chebyshev polynomials $T_n(x)$ which are orthogonal in $[-1,1]$ with weight $\frac{1}{\sqrt{1-x^2}}$.
         We have $ x_i^C = -\cos \phi_i$, $\phi_i = (i-\frac{1}{2})\pi/n$.
      \item The Gauss--Lobatto points $\{ x_i^{GL} \}_{i=0}^n$ are the zeros of $(1-x^2)P'_n(x)$.
      \item The Chebyshev--Lobatto points $\{ x_i^{CL} \}_{i=0}^n$ are the zeros of $(1-x^2)T'_n(x)$.
         We have $ x_i^{CL} = -\cos \phi_i$, $\phi_i = i \pi / n$.
   \end{enumerate}
   Both Legendre and Chebyshev are special cases of the Jacobi polynomials $P_n^{(\alpha, \beta)}$ which are orthogonal on $[-1,1]$ with respect to the weight $(1-x)^\alpha(1+x)^\beta$.
   The properties of the Jacobi polynomials are critical for the estimates in this section, see \cite{Szego1939}.
\end{rem}

We use the notation $x \sim y$ to denote that there are constants $0 < c < C$ independent of the number of quadrature points, such that $c x \leq y \leq C x$.
We first observe that on $[0,\pi/2]$ the functions $x$ and $\sin(x)$ are equivalent with respect to the $\sim$ relation.

\begin{lem} \label{lem:sin}
   If $x$ and $y$ are in $[0,\pi/2]$, then
   $
      x \sim \sin x
   $
   and
   $
      x \sim y \implies \sin x \sim \sin y.
   $
\end{lem}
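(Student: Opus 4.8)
The plan is to prove the two claims separately, with the first providing the essential two-sided bound and the second following formally from it. For the equivalence $x \sim \sin x$ on $[0,\pi/2]$, I would establish the sandwich $\frac{2}{\pi} x \le \sin x \le x$. The upper bound is immediate from $\sin x = \int_0^x \cos t \, dt \le \int_0^x 1 \, dt = x$, valid for all $x \ge 0$. For the lower bound I would invoke the concavity of $\sin$ on $[0,\pi/2]$: since $\sin''(x) = -\sin x \le 0$ there, the graph of $\sin$ lies above the chord joining $(0,0)$ and $(\pi/2,1)$, which is precisely the line $y = \frac{2}{\pi} x$. Equivalently, one checks that $x \mapsto \sin x / x$ is nonincreasing on $(0,\pi/2]$ by differentiating: its numerator $x\cos x - \sin x$ vanishes at $0$ and has nonpositive derivative $-x \sin x$, so the ratio ranges in $[2/\pi, 1]$. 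With $c = 2/\pi$ and $C = 1$ this is exactly $x \sim \sin x$ in the sense defined above.

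For the second claim, I would first record that the relation $\sim$ is symmetric and transitive: if $cx \le y \le Cx$ then $\frac{1}{C} y \le x \le \frac{1}{c} y$, giving symmetry, and composing two such chains multiplies the constants, giving transitivity. The implication is then a one-line chaining argument: given $x \sim y$, we have $\sin x \sim x \sim y \sim \sin y$, where the first and last equivalences come from the first part of the lemma (applied to $x$ and to $y$, respectively), the middle equivalence is the hypothesis, and transitivity yields $\sin x \sim \sin y$.

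I do not expect a genuine obstacle here, since both parts are elementary; the only point requiring care is the lower bound $\sin x \ge \frac{2}{\pi} x$ (Jordan's inequality), which degenerates at $x=0$ where both sides vanish. There the inequality holds trivially and the ratio $\sin x / x$ extends continuously to $1$, so the stated constants are uniform on all of $[0,\pi/2]$. It is also worth confirming explicitly that $\sim$ behaves as an equivalence relation up to constants, so that the chaining used in the second part is rigorously justified.
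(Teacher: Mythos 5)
Your proof is correct and complete: the sandwich $\tfrac{2}{\pi}x \le \sin x \le x$ (Jordan's inequality via concavity) gives the first claim with uniform absolute constants, and the chaining $\sin x \sim x \sim y \sim \sin y$ handles the second. The paper states this lemma without proof, treating it as elementary, so your argument simply supplies the standard justification the authors omitted.
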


In the next proposition we summarize a number of known estimates of quadrature weights and points based on representation of the points via angles on the unit semi-circle.

\begin{prop} \label{prop:asymp}
   The $n$ Gauss weights and points satisfy
   \[
      x^G_i = -\cos \phi^G_i,\qquad
      w^G_i \sim \frac{\pi}{n} \sin \phi^G_i,\qquad
      \phi^G_{i} \sim \frac{2i-1}{2n} \pi \,,
      \qquad i=1, \ldots, n \,.
   \]
   Similarly, the $n+1$ Gauss--Lobatto weights and points satisfy
   \[
      x^{GL}_i = -\cos \phi^{GL}_i,\qquad
      w^{GL}_i \sim \frac{\pi}{n} \sin \phi^{GL}_i,\qquad
      \phi_i^{GL} \sim \frac{i}{n} \pi \,,
      \qquad i=0, \ldots, n.
   \]
   Additionally
   \[
      \phi^{G}_{i} - \phi^{G}_{i-1} \sim  \frac{\pi}{n}
      \qquad\text{and}\qquad
      \phi^{GL}_{i} - \phi^{GL}_{i-1} \sim  \frac{\pi}{n}
   \]
   for $ i=2, \ldots, n$ and $ i=1, \ldots, n$ respectively.

   The quantities $\phi_i^G$ and $\phi_i^{GL}$ are referred to as the \textbf{Gauss angles} and \textbf{Gauss--Lobatto angles}, respectively.
\end{prop}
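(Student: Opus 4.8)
The plan is to treat the identities $x_i = -\cos\phi_i$ as the \emph{definitions} of the angles. Since the abscissas are ordered increasingly and $\phi \mapsto -\cos\phi$ is increasing on $[0,\pi]$, each symmetric rule determines a unique increasing sequence $\phi_i \in [0,\pi]$, with the symmetry $x_i = -x_{n-i+1}$ translating into $\phi_i = \pi - \phi_{n-i+1}$. Everything else is then a matter of importing the classical asymptotics for the zeros of Jacobi polynomials and their Christoffel numbers from \cite{Szego1939} and converting between $\phi_i$ and $\sin\phi_i$ via \Cref{lem:sin}.

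For the angle estimates I would proceed as follows. The Gauss abscissas are the zeros of $P_n = P_n^{(0,0)}$ (\Cref{rem:abscissas}), and Szegő's theorem on the near-equidistribution of Jacobi zeros states that, in the angular variable, the $\nu$-th zero of $P_n^{(\alpha,\beta)}(\cos\theta)$ equals $\frac{(\nu + c_{\alpha,\beta})\pi}{n + d_{\alpha,\beta}} + O(n^{-1})$ uniformly in $\nu$. Specializing to $\alpha=\beta=0$ yields the leading location $\phi_i^G \sim \frac{(2i-1)\pi}{2n}$. For Gauss--Lobatto, the $n-1$ interior points are the zeros of $P_n'$, which up to a constant is $P_{n-1}^{(1,1)}$; applying the same result with $\alpha=\beta=1$ and appending the two endpoints $\phi_0^{GL}=0$ and $\phi_n^{GL}=\pi$ gives $\phi_i^{GL}\sim\frac{i\pi}{n}$. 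The point that upgrades the $O(n^{-1})$ expansion to a genuine two-sided $\sim$ is that the main term already has size $\gtrsim 1/n$ for every $i\geq 1$; securing the matching lower bound on the extreme angles $\phi_1$ and $\phi_n$, where the equispacing picture degrades, requires the Bessel-type behavior of the outermost Jacobi zeros, also found in \cite{Szego1939}.

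The weight estimates follow from the asymptotics of the Christoffel numbers. The Gauss weights admit the representation $w_i^G = 2/\big[(1-(x_i^G)^2)(P_n'(x_i^G))^2\big]$, and the standard asymptotic formula for $P_n$ at its nodes gives $w_i^G \sim \frac{\pi}{n}\sqrt{1-(x_i^G)^2} = \frac{\pi}{n}\sin\phi_i^G$; the interior Gauss--Lobatto weights are handled identically through the $(1,1)$ Jacobi weight. One subtlety I would flag explicitly: the two boundary Gauss--Lobatto weights are $w_0^{GL}=w_n^{GL}=2/(n(n+1))$, which scale like $n^{-2}$ and thus match the size of the first interior weight $\frac{\pi}{n}\sin\phi_1^{GL}\sim n^{-2}$, so the stated equivalence at $i=0,n$ should be read as capturing this $n^{-2}$ scaling rather than as a literal comparison with $\sin\phi_0^{GL}=0$.

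Finally, the gap estimates $\phi_i^G-\phi_{i-1}^G\sim\pi/n$ and $\phi_i^{GL}-\phi_{i-1}^{GL}\sim\pi/n$ are read off from the angle estimates: differencing the leading terms produces the main value $\pi/n$, and the sharper monotone-separation (interlacing) bounds for consecutive Jacobi zeros in \cite{Szego1939} guarantee that each individual gap is bounded above and below by constant multiples of $\pi/n$ uniformly, rather than only on average. I expect the main obstacle to be precisely this uniformity near the endpoints: passing from the standard $O(n^{-1})$ asymptotic expansions, which are informative in the bulk, to two-sided $\sim$ bounds valid for the extreme points and gaps is exactly where the equispacing breaks down and the endpoint (Bessel-regime) estimates must be invoked.
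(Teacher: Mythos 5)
Your outline passes through the same classical source (Szeg\H{o}) as the paper, but the two arguments differ in the decisive technical device, and the difference matters. The paper does not use asymptotic expansions of the Jacobi zeros at all: for the Gauss angles it invokes the Bruns inequalities $\frac{2i-1}{2n+1}\pi \le \phi_i^G \le \frac{2i}{2n+1}\pi$ (equation (6.6.2) in \cite{Szego1939}), and for the Gauss--Lobatto angles the analogous S\"undermann inequalities $\frac{2i}{2n+1}\pi \le \phi_i^{GL} \le \frac{2i+1}{2n+1}\pi$ \cite{Sundermann1983}. These are exact, non-asymptotic two-sided bracketings valid for every index, so both the location equivalences $\phi_i^G \sim \frac{2i-1}{2n}\pi$ and $\phi_i^{GL} \sim \frac{i}{n}\pi$ and the gap equivalences $\phi_i - \phi_{i-1} \sim \pi/n$ fall out by elementary arithmetic with explicit constants (e.g.\ $c=\frac{2}{3}$, $C=3$ for the Gauss locations and $c=\frac13$, $C=\frac32$ for the gaps), with no degradation at the extreme indices. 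The weight equivalence is likewise simply cited in the form $w_i \sim \sqrt{1-x_i^2}/n$ from \cite{CHQZ2} and \cite{Canuto1994} (ultimately Darboux's formula (15.3.10) in \cite{Szego1939}), with the endpoint Lobatto weights $w_0^{GL}=w_n^{GL}=2/(n(n+1))$ handled separately exactly as you flag.

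Your route --- Szeg\H{o}'s near-equidistribution $\theta_\nu = \frac{(\nu+c)\pi}{n+d}+O(n^{-1})$ plus Bessel-regime corrections at the edge --- can be made to work, but as written it leaves the hardest step as an acknowledged IOU: for $i=O(1)$ the main term and the error term have the same size, so the two-sided bounds on $\phi_1$, $\phi_n$ and on the extreme gaps are precisely what you have named but not established. The same applies to the gap uniformity, which you attribute to unspecified ``interlacing bounds.'' Replacing the asymptotic expansion by the Bruns/S\"undermann bracketings makes the entire endpoint discussion unnecessary. Your treatment of the weights, of the conversion between $\phi_i$ and $\sin\phi_i$ via \Cref{lem:sin}, and of the boundary Lobatto weights is correct and matches the paper's.
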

\begin{proof}
   The estimate $w_i \sim \frac{\pi}{n} \sin \phi_i$ can be found in the form
   \[
      w_i \sim \frac{\sqrt{1- x_i^2}}{n}
   \]
   for both the Gauss and Gauss--Lobatto weights as equation (2.3.16) in \cite{CHQZ2}, and for the Gauss--Lobatto weights as equation (2.3) in \cite{Canuto1994}.
   Both Gauss--Lobatto estimates are in the case $i=1, \ldots, n-1$.
   For the special case of the endpoint weights we have
   \[
   w_0^{GL} = w_n^{GL} = \frac{2}{n(n+1)} \sim \frac{1}{n^2} \,.
   \]
   These estimates are derived from Darboux's asymptotic formulas for Jacobi polynomials, see (15.3.10) in \cite{Szego1939} for the Gauss case (in the setting of that paper $\alpha=\beta=0$, $\lambda_\nu=w_i$, and $\theta_\nu=\phi_i$).

   The Bruns estimates for the Gauss angles (cf.\ (6.6.2) in \cite{Szego1939} and \Cref{fig:bruns-sundermann}) are:
   \begin{equation} \label{eq:bruns}
      \frac{2i-1}{2n+1}\pi \leq \phi^G_i \leq \frac{2i}{2n+1} \pi \,,  \qquad i=1, \ldots, n,
   \end{equation}
   from which we obtain
   \[
      \frac{1}{2n+1}\pi \leq \phi^{G}_i - \phi^{G}_{i-1} \leq \frac{3}{2n+1} \pi \,,
   \]
   and therefore
   \[
      c \frac{\pi}{n} \leq \phi^{G}_i - \phi^{G}_{i-1}\leq C \frac{\pi}{n}
   \]
   for $c=\frac{1}{3}$ and $C=\frac{3}{2}$.
   Furthermore, \eqref{eq:bruns} implies
   \[
      c \frac{2i-1}{2n}\pi \leq \phi^G_i \leq C \frac{2i-1}{2n} \pi
   \]
   for $c=\frac{2}{3}$ and $C=3$.

   \begin{figure}
      \centering
      \ifsiam
      \includegraphics[width=\linewidth]{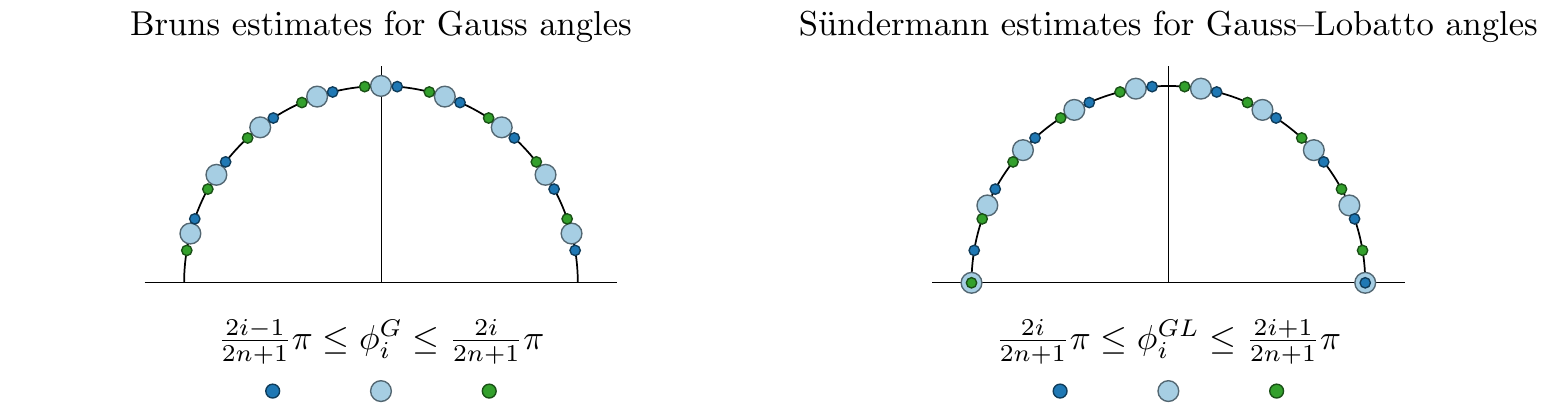}
      \else
      \includegraphics[width=\linewidth]{fig/point_estimates}
      \fi
      \caption{Bruns estimates for the Gauss angles (left) and S\"undermann estimates for the Gauss--Lobatto angles (right) in the case $n=9$.}
      \label{fig:bruns-sundermann}
   \end{figure}

   The S\"undermann estimates for the Gauss--Lobatto angles (cf.\ \cite{Sundermann1983}, \cite[Lemma 4.2]{Melenk02} and \Cref{fig:bruns-sundermann}) are
   \begin{equation} \label{eq:sundermann}
   \frac{2i}{2n+1}\pi \leq \phi_i^{GL} \leq \frac{2i+1}{2n+1} \pi \,,  \qquad i=0, \ldots, n
   \end{equation}
   from which we obtain
   \[
   \frac{1}{2n+1}\pi \leq \phi^{GL}_i - \phi^{GL}_{i-1} \leq \frac{3}{2n+1} \pi \,,
   \]
   and therefore
   \[
   c \frac{\pi}{n} \leq \phi^{GL}_i - \phi^{GL}_{i-1}\leq C \frac{\pi}{n}
   \]
   for $c=\frac{1}{3}$ and $C=\frac{3}{2}$.
   Since $\phi_0^{GL} = 0$, it is sufficient to show $\phi_i^{GL} \sim \frac{i}{n} \pi$ for $i \geq 1$.
   In that case, \eqref{eq:sundermann} implies
   \[
   c \frac{i}{n}\pi \leq \phi_i^{GL} \leq C \frac{i}{n}\pi \,,
   \]
   for $c=\frac{2}{3}$ and $C=\frac{3}{2}$.
\end{proof}

   The Chebyshev and Chebyshev--Lobatto points are defined as $x_i^C = -\cos \phi^C_i$, $\phi^C_i = \frac{2i-1}{2n} \pi$ and $x_i^{CL} = -\cos \phi^{CL}_i$, $\phi^{CL}_i = \frac{i}{n} \pi$, so the angle equivalences in \Cref{prop:asymp} hold as equalities.
   Equivalently, we have
   \[
      \phi^G_{i} \sim  \phi^C_i
      \qquad\text{and}\qquad
      \phi^{GL}_{i} \sim  \phi^{CL}_i \,.
   \]

   We also note the useful property that the $n$ Gauss points $\{x_i^G\}_{i=1}^n$ interleave the $n+1$ Gauss--Lobatto points $\{x_i^{GL}\}_{i=0}^n$, which is a simple consequence of the definitions in \Cref{rem:abscissas}.

\begin{prop} \label{prop:wh-estimates}
   For the points and weights in $[-1,0]$ we have
   \[
      w^G_i \sim (i-1/2) \frac{\pi^2}{n^2}
      \qquad\text{and}\qquad
      h_i^G := x^{G}_{i} - x^{G}_{i-1} \sim (i-1) \frac{\pi^2}{n^2}
   \]
   for $i=1, \ldots, \lceil n/2 \rceil$, and
   \[
      w^{GL}_i \sim i \frac{\pi^2}{n^2}
      \qquad\text{and}\qquad
      h_i^{GL} := x^{GL}_{i} - x^{GL}_{i-1} \sim (i-1/2) \frac{\pi^2}{n^2}
   \]
   for $i=1, \ldots, \lceil n/2 \rceil$ and $w^{GL}_0 \sim 1/n^2$.
   It is straightforward to extend these results to all indices $i$ by symmetry.

   In particular, near the endpoints both the weights $w_i$ and the distances between the quadrature points $h_i$ are of order $\mathcal{O}(n^{-2})$, while in the middle of the interval their order is $\mathcal{O}(n^{-1})$.
\end{prop}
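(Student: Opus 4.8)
The plan is to reduce every estimate to the angle asymptotics of \Cref{prop:asymp} together with the equivalence $x \sim \sin x$ on $[0,\pi/2]$ supplied by \Cref{lem:sin}. Working in $[-1,0]$ and restricting to $i \le \lceil n/2\rceil$ keeps each angle $\phi_i$ (Gauss or Gauss--Lobatto) in $[0,\pi/2]$ up to a vanishing $\mathcal{O}(n^{-1})$ overshoot, so \Cref{lem:sin}, and in particular the implication $x \sim y \Rightarrow \sin x \sim \sin y$, applies throughout.

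For the weights I would start from $w_i \sim \frac{\pi}{n}\sin\phi_i$. By \Cref{lem:sin} and the angle estimate $\phi_i^G \sim \frac{2i-1}{2n}\pi$ of \Cref{prop:asymp},
\[
   \sin\phi_i^G \sim \phi_i^G \sim \frac{2i-1}{2n}\pi = \left(i-\tfrac12\right)\frac{\pi}{n},
\]
and multiplying by $\pi/n$ gives $w_i^G \sim (i-\tfrac12)\frac{\pi^2}{n^2}$. The Gauss--Lobatto interior weights follow identically from $\phi_i^{GL} \sim \frac{i}{n}\pi$, yielding $w_i^{GL} \sim i\,\frac{\pi^2}{n^2}$. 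The asymptotic $w_i \sim \frac{\pi}{n}\sin\phi_i$ does not cover the endpoint, so for $w_0^{GL}$ I would instead invoke the exact value $w_0^{GL} = \frac{2}{n(n+1)} \sim n^{-2}$ recorded in the proof of \Cref{prop:asymp}.

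For the point spacings the key device is the product-to-sum identity
\[
   h_i = x_i - x_{i-1} = \cos\phi_{i-1} - \cos\phi_i = 2\sin\!\left(\frac{\phi_i+\phi_{i-1}}{2}\right)\sin\!\left(\frac{\phi_i-\phi_{i-1}}{2}\right),
\]
which splits $h_i$ into a difference factor and a sum factor. Since $\phi_i-\phi_{i-1}\sim\pi/n$ by \Cref{prop:asymp}, the half-difference is small and \Cref{lem:sin} gives $\sin\!\big(\tfrac12(\phi_i-\phi_{i-1})\big)\sim n^{-1}$. For the sum factor I would bound the average angle two-sidedly: the Bruns estimates \eqref{eq:bruns} give $\tfrac12(\phi_i^G+\phi_{i-1}^G)\sim\frac{(i-1)\pi}{n}$ and the S\"undermann estimates \eqref{eq:sundermann} give $\tfrac12(\phi_i^{GL}+\phi_{i-1}^{GL})\sim\frac{(i-1/2)\pi}{n}$, which \Cref{lem:sin} transfers to the sines. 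Multiplying the two factors produces $h_i^G\sim(i-1)\frac{\pi^2}{n^2}$ and $h_i^{GL}\sim(i-\tfrac12)\frac{\pi^2}{n^2}$. The $\mathcal{O}(n^{-2})$ versus $\mathcal{O}(n^{-1})$ dichotomy is then read off by taking $i=\mathcal{O}(1)$ versus $i\sim n/2$, and all remaining indices follow from the symmetry $x_i=-x_{n-i+1}$, $w_i=w_{n-i+1}$.

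The main obstacle is keeping the sum-factor equivalence uniform in $i$ with $\sim$ constants genuinely independent of $n$. The Bruns lower bound on $\tfrac12(\phi_i^G+\phi_{i-1}^G)$ degenerates exactly at $i=1$, where the factor $(i-1)$ vanishes and no interior spacing is being measured; accordingly I would state the Gauss spacing estimate for $i\ge2$ and treat the endpoint gap $1+x_1^G = 2\sin^2(\phi_1^G/2)\sim n^{-2}$ separately, whereas the Gauss--Lobatto estimate already includes $i=1$ since $x_0^{GL}=-1$ is a node with $\phi_0^{GL}=0$. A secondary point needing care is that the average angle stays within the range where \Cref{lem:sin} is valid; this is precisely where the restriction $i\le\lceil n/2\rceil$ enters, and near $i\approx n/2$ one must verify that the two-sided bounds \eqref{eq:bruns}--\eqref{eq:sundermann} still pin the average angle to $\frac{(i-1)\pi}{n}$, respectively $\frac{(i-1/2)\pi}{n}$, up to absolute constants.
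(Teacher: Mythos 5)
Your proposal follows essentially the same route as the paper's proof: the weights via $w_i \sim \frac{\pi}{n}\sin\phi_i$ combined with the angle asymptotics of \Cref{prop:asymp} and \Cref{lem:sin}, and the spacings via the identical product-to-sum identity with the difference factor $\sim \pi/n$ and the sum factor pinned by the Bruns/S\"undermann bounds. Your extra care at $i=1$ for the Gauss spacing (where the factor $i-1$ vanishes and $x_0^G$ is not a node of the open rule) is a legitimate refinement that the paper's proof glosses over, but it does not change the argument.
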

\begin{proof}
   By \Cref{prop:asymp} and \Cref{lem:sin}
   \[
      w^G_i
      \sim \frac{\pi}{n} \sin \phi^G_i
      \sim \frac{\pi}{n} \phi^G_i
      \sim (i-1/2) \frac{\pi^2}{n^2} \,.
   \]
   The estimate $w^{GL}_i \sim i \frac{\pi^2}{n^2}$ follows that same way.
   Furthermore,
   \[
      h_i^{GL}
      = -\cos{\phi^{GL}_i} +\cos{\phi^{GL}_{i-1}}
      = 2 \sin \frac{\phi^{GL}_i - \phi^{GL}_{i-1}}{2} \sin \frac{\phi^{GL}_i + \phi^{GL}_{i-1}}{2},
   \]
   so
   \[
      h_i^{GL}
      \sim (\phi^{GL}_i - \phi^{GL}_{i-1})  \frac{\phi^{GL}_i + \phi^{GL}_{i-1}}{2}
      \sim \frac{\pi}{n} \frac{2i-1}{2n} \pi
      = (i-1/2) \frac{\pi^2}{n^2},
   \]
   and similarly
   \[
      h_i^{G}
      \sim (\phi^{G}_i - \phi^{G}_{i-1})  \frac{\phi^{G}_i + \phi^{G}_{i-1}}{2}
      \sim \frac{\pi}{n} \frac{4i-4}{4n} \pi
      = (i-1) \frac{\pi^2}{n^2}. \siamqedhere
   \]
\end{proof}

\begin{cor}
   For the appropriately defined indices of Gauss and Gauss--Lobatto points and weights in $[-1,1]$ the following equivalences hold:
   \begin{enumerate}
      \item $w_i^G \sim h_i^{GL} \sim (h_{i+1}^{G}+h_i^{G})/2$
      \item $w_i^{GL} \sim (h_{i+1}^{GL}+h_i^{GL})/2 \sim h_{i+1}^G$ (cf. Lemma 2.1 in \cite{Canuto1994})
      \item $w_i^{GL} \sim (w_{i+1}^{G}+w_i^{G})/2$, $w_{i+1}^{G} \sim (w_{i+1}^{GL}+w_i^{GL})/2$
      \item $h_{i+1}^G \sim (h_{i+1}^{GL}+h_i^{GL})/2$, $h_i^{GL} \sim (h_{i+1}^{G}+h_i^{G})/2$
   \end{enumerate}
\end{cor}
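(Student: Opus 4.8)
The plan is to read off all four groups of equivalences directly from the sharp asymptotics collected in \Cref{prop:wh-estimates}. That proposition already expresses each of the four families of quantities, restricted to the left half-interval $[-1,0]$ (indices $i = 1, \ldots, \lceil n/2 \rceil$), as a single affine-in-$i$ multiple of $\pi^2/n^2$:
\[
   w_i^G \sim (i-\tfrac12)\tfrac{\pi^2}{n^2}, \quad
   w_i^{GL} \sim i\,\tfrac{\pi^2}{n^2}, \quad
   h_i^G \sim (i-1)\tfrac{\pi^2}{n^2}, \quad
   h_i^{GL} \sim (i-\tfrac12)\tfrac{\pi^2}{n^2}.
\]
Because the $\sim$ relation is transitive and, with constants uniform in $i$, is preserved under forming sums and hence averages --- if $A \sim f$ and $B \sim g$ then $A+B \sim f+g$ --- each asserted equivalence collapses to the purely arithmetic check that the affine coefficients of $i$ on its two or three sides coincide.

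I would then verify the coefficients case by case, which is immediate. For the first group, both $w_i^G$ and $h_i^{GL}$ carry the coefficient $(i-\tfrac12)$, while $(h_{i+1}^G + h_i^G)/2 \sim \bigl(i + (i-1)\bigr)/2\,\tfrac{\pi^2}{n^2} = (i-\tfrac12)\tfrac{\pi^2}{n^2}$, so all three agree. For the second group, $w_i^{GL} \sim i\,\tfrac{\pi^2}{n^2}$, $(h_{i+1}^{GL}+h_i^{GL})/2 \sim \bigl((i+\tfrac12)+(i-\tfrac12)\bigr)/2\,\tfrac{\pi^2}{n^2} = i\,\tfrac{\pi^2}{n^2}$, and $h_{i+1}^G \sim i\,\tfrac{\pi^2}{n^2}$. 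The third and fourth groups are the same bookkeeping: for instance $(w_{i+1}^G + w_i^G)/2 \sim i\,\tfrac{\pi^2}{n^2} \sim w_i^{GL}$ and $(w_{i+1}^{GL}+w_i^{GL})/2 \sim (i+\tfrac12)\tfrac{\pi^2}{n^2} \sim w_{i+1}^G$, and analogously for the two $h$-identities. In every case the target coefficient is reproduced exactly, so no inequality beyond those already in \Cref{prop:wh-estimates} is required.

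The two points requiring a little care account for the ``appropriately defined indices'' qualifier in the statement, and constitute the only mild obstacle. First, preservation of $\sim$ under addition is legitimate precisely because the constants supplied by \Cref{prop:wh-estimates} are independent of $i$ and $n$; I would state this uniformity explicitly before summing. Second, the estimates hold only on the left half-interval, so I would extend them to all indices through the symmetry $x_i = -x_{n-i+1}$, $w_i = w_{n-i+1}$, and align the Gauss and Gauss--Lobatto indexings so that the interleaving of the two point sets is respected whenever $i$ and $i+1$ are compared across the two rules. The genuinely exceptional terms are those near the endpoint --- the Gauss--Lobatto endpoint weight $w_0^{GL} \sim 1/n^2$ and the first Gauss spacing, where the coefficient $(i-1)$ degenerates at $i=1$ --- which must be checked separately using the endpoint formulas in \Cref{prop:wh-estimates}; the interior estimates are otherwise entirely mechanical.
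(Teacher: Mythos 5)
Your proposal is correct and is exactly the argument the paper intends: the corollary is stated as an immediate consequence of \Cref{prop:wh-estimates}, and the proof is precisely the coefficient bookkeeping you carry out, using that $\sim$ with $i$- and $n$-uniform constants is transitive and preserved under sums. Your explicit flagging of the degenerate endpoint terms ($w_0^{GL}$ and the first Gauss spacing, where the coefficient $(i-1)$ vanishes) and of the symmetry extension to the right half-interval is exactly what the paper's qualifier ``appropriately defined indices'' is silently absorbing, so no gap remains.
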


   Numerical results establish that the asymptotic estimates established above are quite sharp in practice.

\subsection{Equivalence of 1D high-order and low-order refined functions}
\label{sec:ho-lor-equivalence}

We next use the quadrature rule estimates to derive norm equivalences between high-order functions, which are polynomials of order $n$ or $n-1$ on $[-1,1]$, e.g.\ with degrees of freedom in the points of an open quadrature rule, and low-order refined functions that are piecewise linear $H^1$ or piecewise-constant $L^2$ functions on the 1D mesh defined by the points of a closed quadrature rule.

First note that since both the $n$-point Gauss and the $(n+1)$-point Gauss--Lobatto rules are exact for polynomials of order $2n-1$, for any polynomial $v$ of order $n-1$ we have
\begin{equation}\label{eq:sec2-eq1}
   \|v\|^2_0 = \int_{-1}^1 v(x)^2 dx
   = \sum_{i=1}^n w_i^G v(x^G_i)^2
   = \sum_{i=0}^n w_i^{GL} v(x^{GL}_i)^2.
\end{equation}

The first equality in \eqref{eq:sec2-eq1} can be written in the form $M_G = D_G$, where $M_G$ is the mass matrix for the nodal basis $\{\psi_i\}$ associated with the points $x_j^G$, i.e. $\psi_i(x_j^G)=\delta_{ij}$, and $D_G$ is the diagonal of $M_G$.
The second equality in \eqref{eq:sec2-eq1} implies that $M_{GL}$ is a rank-one update of its diagonal, $D_{GL}$, and the mass matrix in Gauss--Lobatto points can be preconditioned well by its diagonal \cite{Teukolsky2015}.
This statement holds for many other choices of points, specifically, numerical results show that for $n=1,\ldots 40$:
\begin{itemize}
   \item $\kappa(D_G^{-1}M_G) =1$;
   \item $\kappa(D_C^{-1}M_C) \lesssim 1.12$;
   \item $\kappa(D_{GL}^{-1}M_{GL}) \lesssim 1.5$, remarkably this condition number decreases with $n$ (cf.\ \cite{Teukolsky2015});
   \item $\kappa(D_{\overline{GL}}^{-1}M_{\overline{GL}}) \lesssim 1.27$, where $x_i^{\overline{GL}}$ are the midpoints of the intervals $(x_{i-1}^{GL},x_{i}^{GL})$.
\end{itemize}

Canuto has shown that for polynomials of degree $n$, the $L^2$ norm is well-approximated by the $L^2$ norm of its piecewise linear Gauss--Lobatto interpolant \cite{Canuto1994}, summarized in the following proposition.
\begin{prop} \label{prop:norm-equivalence}
   For any polynomial $v=v_H$ of order $n$ let $v_L$ be the piecewise-linear continuous function which has the same values as $v_H$ in the Gauss--Lobatto points, i.e.
   \[
   v_L(x^{GL}_i)=v_H(x^{GL}_i)\,, \qquad i=0,\ldots,n \,.
   \]
   We have
   \begin{equation} \label{eq:canuto1}
   \|v_H\|_0 \sim \|v_L\|_0
   \qquad\text{and}\qquad
   \|v_H'\|_0 \sim \|v_L'\|_0 \,,
   \end{equation}
   or equivalently
   \begin{equation} \label{eq:canuto2}
   \|v\|_0^2 \sim \sum_{i=0}^n \frac{h^{GL}_i+h^{GL}_{i+1}}{2} \, v(x^{GL}_i)^2
   \qquad\text{and}\qquad
   \|v'\|_0^2 \sim \sum_{i=1}^n \frac{1}{h^{GL}_i} \left(v(x^{GL}_i)-v(x^{GL}_{i-1})\right)^2 .
   \end{equation}
\end{prop}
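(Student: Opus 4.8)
The plan is to prove the two equivalent formulations in two stages: first show by an elementary computation that \eqref{eq:canuto1} and \eqref{eq:canuto2} are equivalent, reducing everything to the discrete Gauss--Lobatto norms, and then establish the norm equivalences themselves, treating the $L^2$ and $H^1$ seminorms separately.

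For the first stage I would compute the norms of the piecewise-linear $v_L$ directly. On each subinterval $I_i=[x_{i-1}^{GL},x_i^{GL}]$ the function $v_L$ is linear interpolating the nodal values $v_{i-1}=v(x_{i-1}^{GL})$ and $v_i=v(x_i^{GL})$, so $\int_{I_i}v_L^2=\tfrac{h_i^{GL}}{3}(v_{i-1}^2+v_{i-1}v_i+v_i^2)$ and $\int_{I_i}(v_L')^2=(v_i-v_{i-1})^2/h_i^{GL}$. Summing the second identity reproduces the derivative formula in \eqref{eq:canuto2} exactly, while the elementary bound $\tfrac12(a^2+b^2)\le a^2+ab+b^2\le\tfrac32(a^2+b^2)$ together with reindexing (collecting the two subintervals adjacent to each node) gives $\|v_L\|_0^2\sim\sum_i(h_i^{GL}+h_{i+1}^{GL})v_i^2$, matching \eqref{eq:canuto2}. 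Combined with the weight--spacing equivalence $w_i^{GL}\sim(h_i^{GL}+h_{i+1}^{GL})/2$ from the corollary above, this reduces the $L^2$ statement to showing $\|v_H\|_0^2\sim\sum_i w_i^{GL}v_H(x_i^{GL})^2$, i.e.\ that the $L^2$ norm of a degree-$n$ polynomial is equivalent to the Gauss--Lobatto discrete norm of its square.

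The $L^2$ equivalence I would obtain cleanly from the Legendre expansion. Writing $v_H=\sum_{k=0}^n c_k P_k$ in the Legendre basis $\{P_k\}$ of \Cref{rem:abscissas}, the discrete form $\langle P_j,P_k\rangle_{GL}:=\sum_i w_i^{GL}P_j(x_i^{GL})P_k(x_i^{GL})$ agrees with the exact integral whenever $j+k\le 2n-1$, by the exactness of the $(n+1)$-point rule for degree $2n-1$ used in \eqref{eq:sec2-eq1}. Since $j,k\le n$, the only inexact entry is $j=k=n$, for which the explicit Gauss--Lobatto weights give $\langle P_n,P_n\rangle_{GL}=2/n$ while $\|P_n\|_0^2=2/(2n+1)$; all off-diagonal entries vanish by orthogonality. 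Hence the form is diagonal in the Legendre basis and
\[
   \sum_i w_i^{GL}v_H(x_i^{GL})^2 = \|v_H\|_0^2 + c_n^2\left(\tfrac{2}{n}-\tfrac{2}{2n+1}\right),
\]
with the correction term nonnegative and bounded by $\tfrac{n+1}{n}\|v_H\|_0^2$. This yields $\|v_H\|_0^2\le\sum_i w_i^{GL}v_H(x_i^{GL})^2\le 3\|v_H\|_0^2$, completing the $L^2$ part.

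For the $H^1$ seminorm, the lower bound $\|v_L'\|_0\le\|v_H'\|_0$ is easy: the secant slope $(v_i-v_{i-1})/h_i^{GL}$ equals the average $\tfrac{1}{h_i^{GL}}\int_{I_i}v_H'$ of $v_H'$ over $I_i$, so Cauchy--Schwarz gives $\int_{I_i}(v_L')^2\le\int_{I_i}(v_H')^2$ termwise. The upper bound is the main obstacle. The natural starting point is that $v_H'$ has degree $n-1$, so the $n$-point Gauss rule is exact for $(v_H')^2$ and $\|v_H'\|_0^2=\sum_i w_i^G v_H'(x_i^G)^2$; with $w_i^G\sim h_i^{GL}$ and the interleaving $x_{i-1}^{GL}<x_i^G<x_i^{GL}$, it then remains to compare the pointwise value $v_H'(x_i^G)$ against the secant slope, which is an average of $v_H'$ over the same subinterval. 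The difficulty is that a naive global argument fails: bounding $\|v_H'-v_L'\|_0$ by an interval-wise Poincar\'e estimate together with the global Markov inequality $\|v_H''\|_0\lesssim n^2\|v_H'\|_0$ loses a factor of $n$, because the subintervals in the middle of $[-1,1]$ have size $\sim n^{-1}$ (\Cref{prop:wh-estimates}), too coarse for a uniform inverse estimate. The resolution must exploit the matching between the grid and the polynomial: near the endpoints, where the Legendre-type polynomial $v_H'$ varies most rapidly, the spacing is refined to $\sim n^{-2}$, while in the middle, where the spacing is $\sim n^{-1}$, the polynomial is slowly varying. Making this quantitative requires a localized inverse inequality on each subinterval, controlling the oscillation of $v_H'$ by its mean with constants made uniform through the sharp point and weight asymptotics of \Cref{prop:asymp} and \Cref{prop:wh-estimates}; this grid-adapted local inverse estimate is the crux of the proof.
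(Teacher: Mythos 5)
Your reduction of \eqref{eq:canuto2} to \eqref{eq:canuto1} via the elementary formulas for $\int_a^b \ell^2$ and $\int_a^b (\ell')^2$ on each subinterval, combined with the equivalence $w_i^{GL}\sim(h_i^{GL}+h_{i+1}^{GL})/2$, is exactly the route the paper takes. Your $L^2$ argument via the Legendre expansion — only the $\langle P_n,P_n\rangle_{GL}$ entry of the discrete Gram matrix is inexact, with ratio $(2n+1)/n\le 3$ — is correct and in fact more self-contained than the paper, which simply cites Proposition 2.1 of \cite{Canuto1994} for that half. The one-sided derivative bound $\|v_L'\|_0\le\|v_H'\|_0$, obtained by writing the secant slope as the interval average of $v_H'$ and applying Cauchy--Schwarz, is also fine.

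The genuine gap is the reverse derivative inequality $\|v_H'\|_0\lesssim\|v_L'\|_0$. You correctly diagnose why the naive Poincar\'e-plus-Markov argument loses a factor of $n$, and you correctly identify that any fix must pair the $\mathcal{O}(n^{-2})$ endpoint clustering of the Gauss--Lobatto grid against the endpoint behavior of the degree-$(n-1)$ polynomial $v_H'$ — but you stop at naming the needed ``grid-adapted local inverse estimate'' without stating or proving it. That estimate is precisely the content of Proposition 2.2 of \cite{Canuto1994}, which the paper invokes rather than reproves; the ingredients you list (the exactness $\|v_H'\|_0^2=\sum_i w_i^G v_H'(x_i^G)^2$, the interleaving of Gauss and Gauss--Lobatto points, and the asymptotics of \Cref{prop:asymp} and \Cref{prop:wh-estimates}) are indeed the right ones, but assembling them into a uniform constant is the nontrivial step and is exactly what is missing. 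As written, your proposal establishes the $L^2$ half of \eqref{eq:canuto1} and one inequality of the $H^1$ half; the remaining inequality — the crux, as you yourself call it — is asserted to exist rather than established, so the proof is incomplete unless you either supply that local estimate or cite \cite{Canuto1994} for it as the paper does.
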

\begin{proof}
   The estimates \eqref{eq:canuto1} are propositions 2.1 and 2.2 in \cite{Canuto1994} respectively.
   The estimates \eqref{eq:canuto2} follow from the fact that for a linear function $\ell$ on an interval $[a,b]$ we have
   \[
      \int_{a}^b \ell^2 \sim \frac{(b-a)}{2} \left( \ell^2(a) + \ell^2(b) \right)
      \qquad\text{and}\qquad
      \int_{a}^b (\ell')^2 = \frac{1}{(b-a)} \left( \ell(b)-\ell(a) \right)^2 \,.
   \]
   By examining the proofs in \cite{Canuto1994} we notice that the only requirement on the closed set of points is $(h_i+h_{i+1})/2 \sim w_i$.
   The last statement then follows from \Cref{prop:wh-estimates}.
\end{proof}

The next two propositions combine all 1D estimates so far to provide the key
ingredient for the proof of \Cref{lem:R-lower-bound}. Informally it states that
the $L^2$ norm of a 1D polynomial of order $n-1$ is equivalent to the $L^2$ norm
of the piecewise-constant function of its averages on the $n$ intervals defined
by a closed quadrature rule with $n+1$ points.

\begin{prop} \label{prop:lor-equivalence}
   For any polynomial $v$ of order $n-1$ let $v_L$ be the piecewise-constant discontinuous function on the mesh of Gauss--Lobatto points that has the same averages as $v$ on each subinterval $e_i=(x_{i},x_{i+1})$ , i.e.,
   \[
      v_L|_{e_i}= \frac{1}{h_i}\int_{e_i} v \,, \qquad i=1,\ldots,n \,.
   \]
   We have
   \begin{equation} \label{eq:r1}
      \|v\|_0 \sim \|v_L\|_0,
   \end{equation}
   or equivalently
   \begin{equation} \label{eq:r2}
      \|v\|_0^2  \sim \sum_{i=1}^n \frac{1}{h_i} \left(\int_{e_i} v\right)^2.
   \end{equation}
\end{prop}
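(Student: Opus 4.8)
The plan is to first note that the two stated forms are literally the same, and then to reduce the substantive claim to the derivative norm equivalence already established in \Cref{prop:norm-equivalence} via an antiderivative trick. Writing $c_i = v_L|_{e_i} = \frac{1}{h_i}\int_{e_i} v$, a direct computation gives $\|v_L\|_0^2 = \sum_{i=1}^n h_i c_i^2 = \sum_{i=1}^n \frac{1}{h_i}\left(\int_{e_i} v\right)^2$, so \eqref{eq:r1} and \eqref{eq:r2} are equivalent and it suffices to prove $\|v\|_0 \sim \|v_L\|_0$. One inequality is cheap: since $v_L$ is exactly the $L^2$ projection of $v$ onto piecewise constants on the Gauss--Lobatto mesh (equivalently, by Cauchy--Schwarz $\left(\int_{e_i} v\right)^2 \leq h_i \int_{e_i} v^2$), we immediately get $\|v_L\|_0 \leq \|v\|_0$. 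The genuine content is the reverse bound $\|v\|_0 \lesssim \|v_L\|_0$ with a constant \emph{independent of $n$}; a bare injectivity argument (the map $v \mapsto (c_1,\dots,c_n)$ is a bijection on order-$(n-1)$ polynomials by the $q=0$ case of \Cref{lem:orthogonality}) yields only $n$-dependent constants, so a sharper tool is needed.

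The key idea is to pass to the antiderivative. I would set $V(x) = \int_{-1}^x v(t)\,dt$, which is a polynomial of order $n$, so that $\int_{e_i} v = V(x_{i+1}) - V(x_i)$ and hence the cell average on $e_i$ is
\[
   c_i = \frac{V(x_{i+1}) - V(x_i)}{h_i}.
\]
Letting $V_L$ denote the continuous piecewise-linear interpolant of $V$ at the Gauss--Lobatto points, the right-hand side is precisely the slope of $V_L$ on $e_i$; that is, $v_L = V_L'$ as piecewise-constant functions, and therefore $\|v_L\|_0 = \|V_L'\|_0$.

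I would then invoke the derivative norm equivalence of \Cref{prop:norm-equivalence}, applied to the order-$n$ polynomial $V$, which gives $\|V_L'\|_0 \sim \|V'\|_0$ with constants independent of $n$. Since $V' = v$, combining this with the previous identity yields $\|v_L\|_0 = \|V_L'\|_0 \sim \|V'\|_0 = \|v\|_0$, establishing \eqref{eq:r1}, and \eqref{eq:r2} follows from the equivalence of forms noted at the outset.

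The main obstacle here is conceptual rather than computational: recognizing that the cell averages of $v$ are exactly the nodal slopes of the piecewise-linear interpolant of its antiderivative $V$. Once this reduction is in place, the $n$-uniform lower bound --- the only nontrivial part of the statement --- is inherited directly from Canuto's equivalence $\|V'\|_0 \sim \|V_L'\|_0$, so no new quadrature estimates beyond \Cref{prop:norm-equivalence} (together with the $h_i \sim w_i^{GL}$ facts of \Cref{prop:wh-estimates} already folded into its proof) are required.
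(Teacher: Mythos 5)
Your proposal is correct and follows essentially the same route as the paper: the paper also passes to an antiderivative $w$ with $w'=v$ and applies the derivative estimate in \eqref{eq:canuto2} (equivalently, Canuto's $\|V'\|_0\sim\|V_L'\|_0$) to conclude. Your added observations --- that the two stated forms are algebraically identical and that the direction $\|v_L\|_0\le\|v\|_0$ is just the $L^2$-projection bound --- are correct but implicit in the paper's one-line argument.
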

\begin{proof}
   Let $w$ be a polynomial of order $n$ that satisfies $w'=v$.
   By \eqref{eq:canuto2} applied to $w$ we have
   \[
      \|v\|_0^2
      = \|w'\|_0^2 \sim \sum_{i=1}^n \frac{1}{h_i} \left(w(x_i)-w(x_{i-1})\right)^2
      = \sum_{i=1}^n \frac{1}{h_i} \left(\int_{e_i} v\right)^2.
      \siamqedhere
   \]
\end{proof}

We now want to extend \Cref{prop:lor-equivalence} to more general sets of points.
Notice that the only condition that is required is for $\| w' \|_0 \sim \| w_h' \|_0$, where $w_h$ is the piecewise linear interpolant at the given points.

\begin{prop} \label{prop:general-equivalence}
   Consider any closed set of $(n+1)$ points (i.e.\ containing the interval endpoints) that satisfy condition (2.21) from \cite{Mastroianni2010} (note that this includes both Gauss--Lobatto and Chebyshev Lobatto points).
   Let $w$ be a polynomial of degree $n$, and let $w_h$ be the piecewise linear interpolant of $w$ at these points.
   Then, $\| w' \|_0 \sim \| w_h' \|_0$.
\end{prop}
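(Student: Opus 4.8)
The plan is to reduce the claimed equivalence to a Marcinkiewicz--Zygmund--type inequality for the derivative polynomial $v := w'$, which has degree $n-1$. Since $w_h$ is the piecewise linear interpolant on the mesh defined by the $n+1$ points, its derivative is piecewise constant, with value $w_h'|_{e_i} = (w(x_i)-w(x_{i-1}))/h_i = \frac{1}{h_i}\int_{e_i} v$ equal to the mean of $v$ on each subinterval $e_i = (x_{i-1},x_i)$, where $h_i = x_i - x_{i-1}$. Hence
\[
   \| w_h' \|_0^2 = \sum_{i=1}^n \frac{1}{h_i}\left( \int_{e_i} v \right)^2,
\]
and the proposition amounts to showing that this quantity is equivalent to $\|v\|_0^2 = \|w'\|_0^2$. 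This is exactly the general-point analogue of the second estimate in \eqref{eq:canuto2}.

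First I would dispatch the easy direction $\|w_h'\|_0 \le \|w'\|_0$, which holds for any set of points and needs no spacing hypothesis: on each $e_i$, Cauchy--Schwarz gives $\frac{1}{h_i}(\int_{e_i} v)^2 \le \int_{e_i} v^2$, and summing yields the bound. Equivalently, $w_h'$ is the $L^2$-orthogonal projection of $v$ onto piecewise constants on the mesh, so subinterval averaging is a contraction.

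The substance is the reverse inequality $\|w'\|_0 \lesssim \|w_h'\|_0$, and this is where condition (2.21) enters. Applying the mean value theorem to the continuous polynomial $v$ on each subinterval produces a point $\xi_i \in e_i$ with $v(\xi_i) = \frac{1}{h_i}\int_{e_i} v$, so that
\[
   \| w_h' \|_0^2 = \sum_{i=1}^n h_i \, v(\xi_i)^2 .
\]
I would then invoke a Marcinkiewicz--Zygmund inequality for polynomials of degree $n-1$: under condition (2.21) from \cite{Mastroianni2010}, the weighted point-evaluation sum $\sum_i h_i v(\xi_i)^2$ bounds $\|v\|_0^2$ from below with a constant independent of $n$, giving $\|w'\|_0^2 = \|v\|_0^2 \lesssim \sum_i h_i v(\xi_i)^2 = \|w_h'\|_0^2$. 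Combining the two directions yields $\|w'\|_0 \sim \|w_h'\|_0$. I would close by noting that the Gauss--Lobatto and Chebyshev--Lobatto points satisfy (2.21), so that \Cref{prop:norm-equivalence}, and hence \Cref{prop:lor-equivalence}, are recovered as special cases.

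The main obstacle is the lower Marcinkiewicz--Zygmund estimate. The delicate point is that the sample abscissas $\xi_i$ are interior points of the subintervals, determined implicitly by $v$, rather than the nodes $x_i$ themselves; passing from a nodal inequality to this Riemann-sum form requires controlling the oscillation of $v$ across each $e_i$. This is precisely what the quasi-uniform spacing encoded in condition (2.21) supplies, once combined with a Bernstein--Markov inequality bounding $|v'|$ on $e_i$ in terms of $n$ and the local spacing. Verifying that (2.21) delivers these constants uniformly in $n$ --- and citing the precise statement in \cite{Mastroianni2010} from which the inequality follows --- is the crux of the argument.
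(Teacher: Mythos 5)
Your easy direction is fine: identifying $w_h'$ on each subinterval with the average of $v=w'$ and observing that subinterval averaging is an $L^2$ contraction is correct (the paper phrases the same fact as minimality of the $H^1$ seminorm of the interpolant). Your reduction of the proposition to the equivalence $\|v\|_0^2 \sim \sum_i h_i^{-1}\bigl(\int_{e_i} v\bigr)^2$ for $v$ of degree $n-1$ is also exactly right. The gap is in the hard direction. The ``Marcinkiewicz--Zygmund inequality'' you invoke at the mean-value points $\xi_i$ is not an off-the-shelf result, and it cannot be obtained from a nodal MZ inequality plus oscillation control. First, the sampling density here is critical: $n$ sample points, one per subinterval, for a polynomial of degree $n-1$. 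At this density the lower MZ bound for \emph{arbitrary} points $\xi_i\in e_i$ is false --- a degree-$(n-1)$ polynomial can vanish at $n-1$ of the sample points (one per subinterval), so the sampled sum collapses to a single term that need not control $\|v\|_0^2$; a Legendre-type $v$ whose roots interlace the mesh makes this quantitative. Second, your proposed fix via a Bernstein--Markov bound does not rescue the argument: in the arcsine metric the subintervals have angular width $\sim 1/n$ while Bernstein allows $v$ to oscillate by $O(\|v\|_\infty)$ over such a width, so $v$ can genuinely change sign inside a subinterval and the average can cancel. What saves the true statement is precisely that the $\xi_i$ are the mean-value points, i.e.\ that the sampled values are the subinterval averages --- but in that form the inequality \emph{is} the proposition (it is the $1$D content of \Cref{lem:R-lower-bound}), so your argument is circular at the crux.

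The paper's proof of the hard direction takes a different route that you would need to reproduce or replace. It constructs, via \cite{Bernardi1992}, a degree-$n$ polynomial $\pi_n$ approximating the piecewise linear function $w_h$ with $\|w_h-\pi_n\|_0\lesssim n^{-1}\|w_h\|_0$, $\|w_h-\pi_n\|_1\lesssim\|w_h\|_1$, and $\pi_n(\pm1)=w_h(\pm1)$. Since the node set contains $\pm1$, the polynomial $w-\pi_n$ vanishes at the endpoints, so the inverse inequality gives $\|w'-\pi_n'\|_0\lesssim n\|w-\pi_n\|_0$; condition (2.21) enters only through Theorem 2.2 of \cite{Mastroianni2010}, which supplies the interpolation error bound $\|w-w_h\|_0\lesssim n^{-1}\|w_h'\|_0$. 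Chaining these yields the stability estimate $\|w'\|_0\lesssim\|w_h\|_1$, and a Poincar\'e inequality applied to $w_h$ minus its mean upgrades this to $\|w'\|_0\lesssim\|w_h'\|_0$. If you want to keep an MZ-flavored argument, you would have to prove a lower bound for the piecewise-constant $L^2$ projection on $\mathcal{P}_{n-1}$ directly, which is a genuinely different (and not easier) task than citing a quadrature inequality.
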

\begin{proof}
   Of all functions in $H^1[-1,1]$ that interpolate $w$ at the given points, the piecewise linear interpolant $w_h$ has minimum $H^1$ seminorm (cf.\ \cite{Canuto1994}).
   Therefore, $\| w_h' \|_0 \lesssim \| w' \|_0$.
   It remains to show $\| w' \|_0 \lesssim \| w_h' \|_0$.
   By \cite{Bernardi1992}, there exists a polynomial $\pi_n$ of degree $n$ that satisfies the following three properties:
   \begin{align}
      \label{eq:l2-estimate} \| w_h - \pi_n \|_0 &\lesssim n^{-1} \| w_h \|_0, \\
      \label{eq:h1-estimate}\| w_h - \pi_n \|_1 &\lesssim \| w_h \|_1, \\
      \pi_n(\pm 1) &= w_h(\pm 1),
   \end{align}
   where the implicit constants in the inequalities are independent of $n$.
   Then, by property \eqref{eq:h1-estimate},
   \begin{align*}
      \| w' - w_h' \|_0
         \leq \| w_h' - \pi_n' \|_0 + \| w' - \pi_n' \|_0
         \lesssim \| w_h \|_1 + \| w' - \pi_n' \|_0.
   \end{align*}
   Notice that because the interpolation points include $\pm 1$, $w - \pi_n$ is a polynomial of degree $n$ that vanishes at both endpoints.
   Therefore, by the inverse inequality on polynomials (\cite{Bernardi1992}, Lemma 4.4), we have
   $
      \| w' - \pi_n' \|_0 \lesssim n \| w - \pi_n \|_0.
   $
   Then, by property \eqref{eq:l2-estimate},
   \begin{align*}
      \| w - \pi_n \|_0
         \leq \| w - w_h \|_0 + \| \pi_n - w_h \|
         \lesssim \| w - w_h \|_0 + n^{-1} \| w_h \|_0.
   \end{align*}
   Theorem 2.2 from \cite{Mastroianni2010} gives us
   $
      \| w - w_h \|_0 \lesssim n^{-1} \| w_h' \|_0,
   $
   and so, combining the above estimates, we obtain the error estimate
   $
      \| w' - w_h' \|_0 \lesssim \| w_h \|_1
   $
   In particular, we have the stability result
   $
      \| w' \|_0 \lesssim \| w_h \|_1.
   $
   Letting $\overline{w}_h$ denote the average of $w_h$, we apply the Poincar\'e inequality for zero-mean functions (as in \cite{Canuto1994}) to obtain
   $
      \| w' \|_0 = \| (w - \overline{w}_h)' \|_0 \lesssim \| w_h - \overline{w}_h \|_1 \lesssim \| w_h' \|_0.
   $
   We therefore conclude that $\| w' \|_0 \sim \| w_h' \|_0$.
\end{proof}

We are now ready to prove \Cref{lem:R-lower-bound}.
\begin{proof}[Proof of \Cref{lem:R-lower-bound}]
We first consider the case where $V_H$ is the space of polynomials of degree $p$ on $[0,1]$, and $V_L$ is the space of piecewise constant functions defined on the subintervals defined by $n+1$ Gauss--Lobatto points, where $n \geq p + 1$.
Then, defining the operator $R$ by \eqref{eq:R}, we have
\[
   (R v_H, w_L) = (v_H, w_L) \qquad\text{for all $w_L \in V_L$}.
\]
Let $\kappa_L$ denote a given Gauss--Lobatto subinterval.
Choosing $w_L$ to take the value 1 on $\kappa_L$, and 0 elsewhere, we see that $R v_H$ is the piecewise constant function that is equal to the average value of $v_H$ over each Gauss--Lobatto subinterval.
By \Cref{prop:lor-equivalence,prop:general-equivalence}, we see that $\| R v_H \|_0 \sim \| v_H \|_0$, independent of the polynomial degree of the high-order space $p$.
This result trivially extends to the $d$-dimensional cube $[-1,1]^d$ by writing the $d$-dimensional restriction operator $R_d$ as the Kronecker product of the one-dimensional restriction, $R_d = R \otimes \cdots \otimes R$.
Similarly, this estimate can be extended to affine elements with constant Jacobian determinant (cf.\ \Cref{rem:mapping}).

We next consider the case of more general quadrature point sets that satisfy the hypotheses of \Cref{prop:general-equivalence}. Let $V_L$ be the piecewise constant finite element space defined on a low-order refined mesh with subelements defined by such rule. Applying the above result element-by-element, over each element in the high-order space $V_H$, we get
\[
   \| R v_H \|_0 \sim \| v_H \|_0,
\]
and so the estimate $\| R v_H \|_0 \geq \alpha \| v_H \|_0$ holds, with $\alpha = \mathcal{O}(1)$.
\end{proof}

\begin{rem}[Non-affine and curved elements]
   The proof of \Cref{lem:R-lower-bound} holds for affine elements with constant Jacobian determinant.
   In the numerical results in \Cref{sec:numerical}, we consider the more general case of curved elements and mapped geometries given by $\kappa = T([-1,1]^d)$ for diffeomorphism $T : \mathbb{R}^d \to \mathbb{R}^d$.
   The extension of the analysis to this case remains open.
\end{rem}

\begin{rem}[Alternative node sets]
  Empirical results suggest that any set of nodes that is asymptotically distributed according to the \textit{Chebyshev density} $\sim n/(\pi \sqrt{1-x^2})$ will result in accurate transfer operators
   \cite{Krylov1962,Trefethen2008}.
   The numerically computed values of $\alpha$ (i.e.\ the lower bounds of the $R$ operator) for the case of piecewise constant low-order space in one spatial dimension are shown in \Cref{fig:nodes} for a variety of node sets, including uniformly spaced, Gauss--Lobatto, Chebyshev--Lobatto, nodes, as well as the augmented Chebyshev and Gauss--Legendre sets (obtained by taking the union with the interval endpoints $\{-1,1\}$).
   These numerical results suggest that while the lower bound for $R$ degrades severely for uniformly spaced points, it is essentially uniform for other node choices.
\end{rem}

\begin{figure}
   \centering
   \ifsiam
   \includegraphics[width=0.9\linewidth]{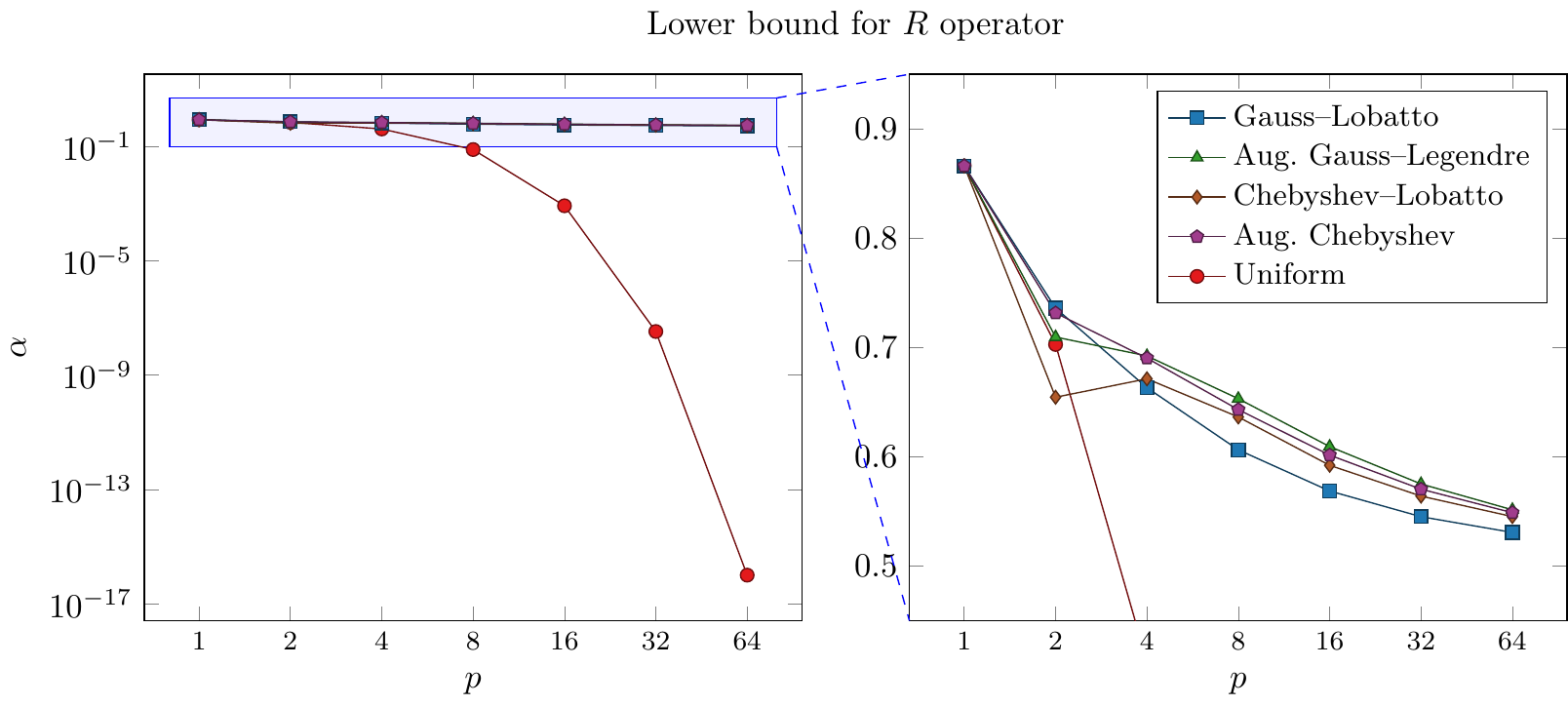}\\
   \else
   \includegraphics[height=2.5in]{fig/quad}\\
   \fi
   \caption{Numerically computed estimates for the lower bound for the $R$ operator for different node choices.
   The right pane in a zoom-in of the plot on the left pane.
   Node sets with Chebyshev distribution remain bounded, while uniformly distributed points exhibit exponential decay of the lower bound.}
   \label{fig:nodes}
\end{figure}

\section{Implementation and numerical results}
\label{sec:numerical}

In this section we discuss the practical implementation of the proposed
transfer operators, particularly with respect to matrix-free efficiency which is
critical for high-order methods. We also present a number of numerical results
confirming the accuracy and conservation analysis in \Cref{sec:accuracy} and
\Cref{sec:mappings}, discuss curved meshes and preconditioning, and
demonstrate the utility of the $R$ and $P$ mappings in the context of adaptive
mesh refinement and conservative multi-discretization coupling.

\subsection{Efficient implementation}
\label{sec:efficient-implementation}

In this section, we describe the efficient implementation of the transfer operators on tensor-product meshes.
In particular, we focus on the high-order matrix-free context, where the computational complexity and storage costs associated with fully assembled matrix-based algorithms are prohibitively expensive.
We will make the assumption that the low-order space $V_L$ is discontinuous, whereas the high-order space $V_H$ can be either continuous or discontinuous.
Let $q$ denote the polynomial degree of the low-order space $V_L$, and let $p$ denote the polynomial degree of $V_H$.
The space $V_L$ is defined on a LOR mesh $\mathcal{T}_L$, which is obtained from the high-order mesh $\mathcal{T}_H$ by subdividing each element into $n^d$ subelements, where $d$ is the spatial dimension, and $n(q+1) \geq p+1$ as required by \eqref{eq:nqp}.

The main tools required for efficient implementation of the transfer operators are \textit{sum factorization}, which allows for the efficient computation of the \textit{action} of the linear operators with optimal memory requirements, and \textit{matrix-free preconditioning}, whereby preconditioners are constructed without access to the entries of the matrix representation of the operator.
We begin with discussion of the restriction operator $R = M_L^{-1} M_{LH}$, since that is also necessary for the computation of the prolongation operator $P$.
The mixed mass matrix $M_{LH}$ can be written as
\[
   M_{LH} = \ME \Lambda,
\]
where $\Lambda$ is the boolean \textit{assembly matrix} that duplicates degrees of freedom shared between elements, and $\ME$ is a block-diagonal matrix whose blocks are the elemental mixed mass matrix.
The blocks of the matrix $\ME$ are of size $(n(q+1))^d \times (p+1)^d$.
In general, each block is dense, and so the memory required to store the assembled mixed mass matrix scales like $(nqp)^d$, and the number of operations required to compute matrix-vector products scales the same.
The number of operations required to form these blocks is $\mathcal{O}\big(n^d q^d p^{2d}\big)$ using naive algorithms, and $\mathcal{O}(n^d q^d p^{d+1})$ using sum factorization techniques \cite{Melenk2001}.

On the other hand, computing the matrix-free action of $\ME$ can be performed in $\mathcal{O}(n^d(p^{d+1} + q^{d+1}))$ operations using sum factorization.
Perhaps more importantly, the memory required to compute the matrix-free action is optimal: assuming that $nq \sim p$, only $\mathcal{O}(p^d)$ memory is required.
As a consequence, the matrix-free algorithm has significantly higher arithmetic intensity than the matrix-based algorithm.
On GPU-based platforms, memory transfer is typically the bottleneck, and the matrix-free algorithms can be expected to outperform algorithms requiring fully assembled matrices \cite{Ljungkvist2017,Franco2020,ceed_bp_paper_2020}.
The appropriate choice of algorithm will depend on both polynomial degrees $p$ and $q$.
In the context of discontinuous Galerkin methods, it has been shown that for moderate orders of $p=3$ or $p=4$, then efficient matrix-free algorithms may significantly outperform the corresponding matrix-based algorithms \cite{Kronbichler2019,Kronbichler2019a}.
However, if the low-order space has polynomial degree $q=0$ or $q=1$, the overhead required for matrix assembly is typically small enough so that matrix-based algorithms remain practical.

The low-order polynomial degree $q$ is typically chosen so that the matrix $M_L$ can be efficiently assembled.
Since the low-order space is discontinuous, the inverse $M_L^{-1}$ can be computed block-by-block using direct methods.
In many practical cases, the low-order space consists of piecewise-constant functions ($q=0$), and so $M_L$ is in fact a diagonal matrix.
In cases where $q$ may be large enough to warrant matrix-free algorithms, an element-by-element preconditioned conjugate gradient algorithm may be used.
In this case, effective diagonal or tensor-product preconditioners ensure uniform convergence of the iterations \cite{Teukolsky2015,Pazner2018e}.

Efficient implementation of the $P$ operator builds on the preceding discussion of the $R$ operator.
We recall that $P = (R^T M_L R)^{-1} R^T M_L$.
The challenging aspect of this operator is performing the action of $A^{-1}$, where $A = R^T M_L R$.
If the high-order space is discontinuous then $A$ is block-diagonal.
In cases where the high-order polynomial degree $p$ is not prohibitively high, this allows for the block-by-block inversion of the operator using direct methods, just as in the case of the DG mass matrix.
For large polynomial degree $p$, it is more efficient to solve the resulting system using a preconditioned conjugate gradient solver.
Furthermore, when the space $V_H$ is continuous, then the system $A$ becomes globally coupled, and block-by-block algorithms are no longer feasible.
In these cases, the matrix-free action of $A$ is performed, as described above.
In \Cref{sec:preconditioning-P}, is shown that any uniform preconditioner for the high-order mass matrix $M_H$ is a uniform preconditioner for $A$.
As a consequence, on tensor-product meshes, the diagonal of the high-order mass matrix is an effective preconditioner for $A$.
Iteration counts using this choice of preconditioner are presented in \Cref{sec:numerical-preconditioning}.

\subsection{Numerical experiments}

The algorithms described in this paper have been implemented in the MFEM finite element library \cite{Anderson2020, mfem-web}, and that implementation was used to perform the numerical experiments presented in this section.
The problems on tensor-product methods make use of the \textit{partial assembly} features of MFEM to implement efficient sum factorized operator action.

\subsubsection{2D test case}

As a first numerical example, we consider a unstructured, straight-sided two-dimensional mesh and high-order $H^1$-conforming finite element space $V_H$ with polynomial degree $p$.
Let $V_L$ denote the piecewise-constant discontinuous space defined on the Gauss--Lobatto refined mesh, where each element is subdivided into $(p+1)^2$ sub-elements.
Consider the function $f$ defined by
\[
   f = \exp(0.1\sin(5.1x - 6.2y) + 0.3\cos(4.3x + 3.4y)).
\]
Let $\Pi_H$ denote $L^2$ projection onto the space $V_H$, and let $R$ and $P$ denote the transfer operators as defined in \Cref{sec:mappings}.
For $g \in  \{ \Pi_H f, R\Pi_H f, PR\Pi_H f \}$, we compute the $L^2$ error $\| f - g \|_0$, and the integral difference $\int_\Omega (f-g) \,dx$.
We consider four uniform refinements of the original mesh, and present the results in \Cref{fig:pw-const-ers}.
As expected by well-known properties of the $L^2$ projection, the $L^2$ error $\| \Pi_H f - f \|_0$ scales like $\mathcal{O}(h^{p+1})$, and the $L^2$ error of the piecewise constant approximation $\| R\Pi_H f - f \|_0$ scales like $\mathcal{O}(h)$.
All of these operations are conservative, and the total integral is preserved up to machine precision for each of the functions.
By \Cref{thm:abstract-properties}, we have $PR=I$, and so the $L^2$ errors for $\Pi_H f$ and $PR\Pi_H f$ are equal.

Additionally, we consider the functions $\Pi_L f$ and $P \Pi_L f$, where $\Pi_L$ denotes $L^2$ projection onto the space $V_L$.
Even though we can only expect $\Pi_L f$ to be first-order accurate, \Cref{thm:P-accuracy} implies that the $L^2$ error of $P \Pi_L f$ will scale like $\mathcal{O}(h^{p+1})$.
This property is verified in \Cref{fig:pw-const-ers}.
Mass conservation is also preserved up to machine accuracy for this test case.

\begin{figure}[ht]
   \centering
   \ifsiam
   \includegraphics[width=\linewidth]{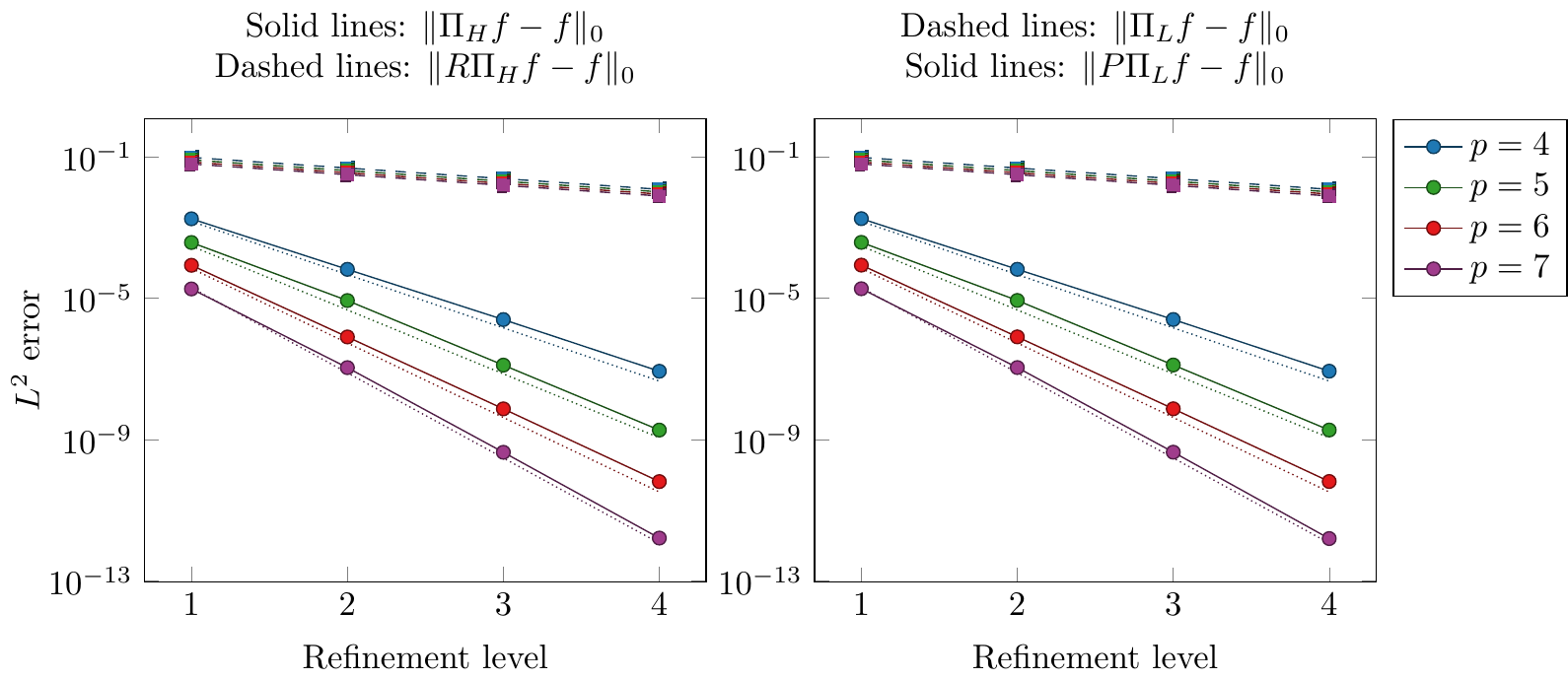}
   \else
   \includegraphics{fig/pwconst_ers}
   \fi
   \caption{
      2D test case.
      $L^2$ errors for transfer operators between high-order finite element space $V_H$ and piecewise-constant low-order space $V_L$.
      Dotted reference lines shown for $\mathcal{O}(h^{p+1})$.
      Dashed lines converge at a rate of $\mathcal{O}(h)$.
   }
   \label{fig:pw-const-ers}
\end{figure}

We also repeat the same 2D test with a piecewise-linear low-order space.
The results are similar to the previous case, but as predicted by the analysis we observe second-order convergence for the low-order functions $R\Pi_H f$ and $\Pi_L f$.

\subsubsection{3D test case}

\begin{figure}[ht]
   \centering
   \ifsiam
   \includegraphics[width=\linewidth]{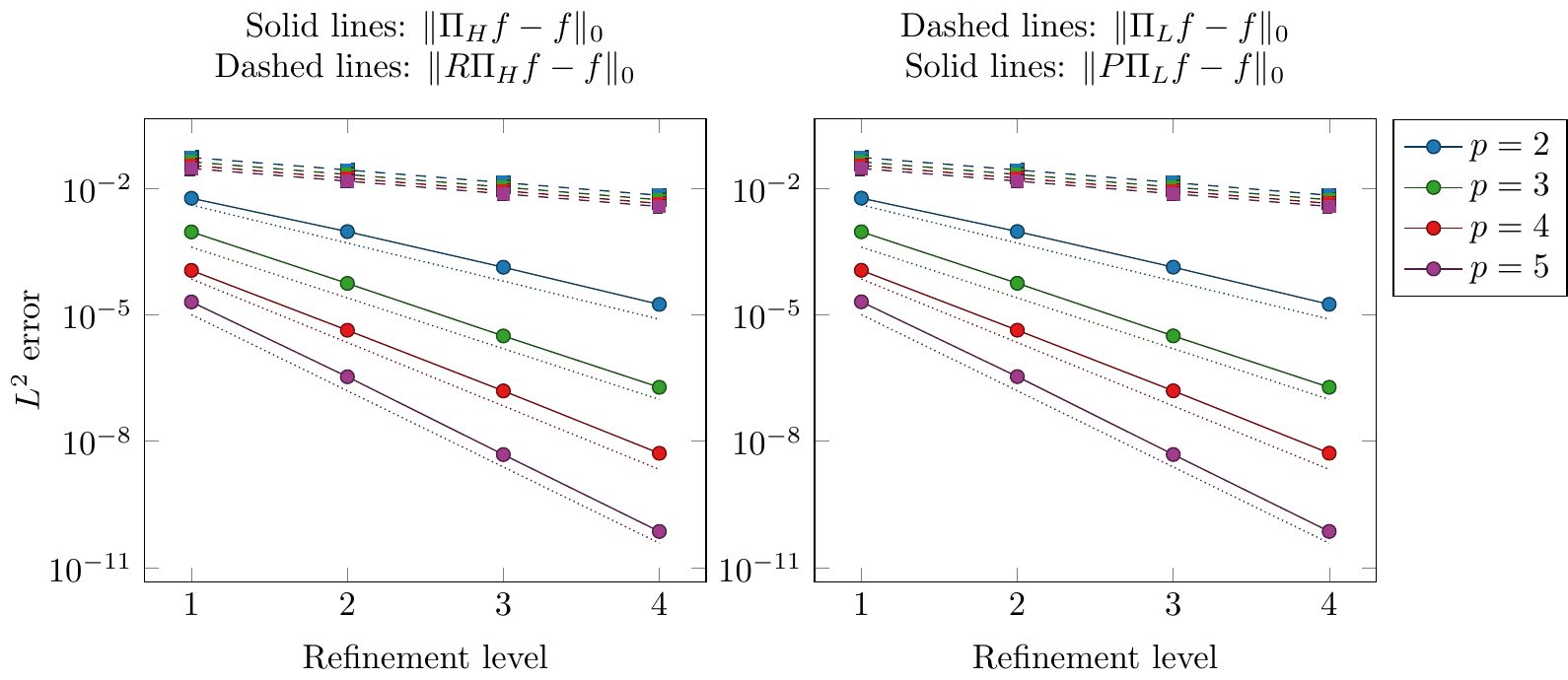}
   \else
   \includegraphics{fig/pwconst_3d_ers}
   \fi
   \caption{
      3D test case.
      $L^2$ errors for transfer operators between high-order finite element space $V_H$ and piecewise-constant low-order space $V_L$.
      Dotted reference lines shown for $\mathcal{O}(h^{p+1})$.
      Dashed lines converge at a rate of $\mathcal{O}(h)$.
   }
   \label{fig:pw-const-3d-ers}
\end{figure}

The analysis and implementation are readily extendable to 3D as shown by the results in \Cref{fig:pw-const-3d-ers}.
The initial mesh for this problem was a $4 \times 4 \times 4$ Cartesian grid, which was then refined uniformly three times to obtain a final mesh of 32{,}768 elements.
Polynomial degrees $p = 2,3,4,5$ were used for the high-order space. The low-order space $V_L$ was taken to be piecewise constant.

\subsection{Curved geometries}
\label{sec:curved}

It is natural for the high-order space $V_H$ to be defined on a high-order (i.e.\ curved) mesh.
While the low-order refined space $V_L$ can in principle be defined on the same curved geometry, it is often advantageous and more natural to define $V_L$ on an associated straight-sided mesh.
For example, if the low-order space $V_L$ is used to transfer solution fields to a low-order discretization that does not support curved meshes, then the mesh must be straight-sided out of practical concerns.
However, the process of converting a curved mesh to straight-sided (e.g.\ by linear interpolation at nodal points) incurs geometric errors.
For example, the total volume and the volumes of individual elements are not guaranteed to be preserved.
In this case, a constant-preserving transfer operator cannot be conservative \cite{Anderson2014}.

To numerically study the performance of the transfer operators defined on curved meshes, we consider a two-dimensional mixed mesh consisting of triangular and quadrilateral elements with mappings defined by degree-3 polynomials.
The mesh is obtained from that shown in \Cref{fig:ho-lor-mesh} by splitting half of the quadrilaterals into triangles and perturbing the mesh nodes.
The high-order space is a degree-5 $H^1$-conforming finite element space defined on the curved mesh, and the low-order refined mesh is a degree-2 $L^2$ finite element space defined on the straight-sided mesh obtained by interpolating the nodal points of the high-order mesh.
(We have $n=4$, $q=2$ and $p=5$, so condition \eqref{eq:nqp} holds.)
The results are presented in \Cref{tab:curved-mesh}.
We observe that although the $L^2$ error of $\Pi_L f$ scales like $\mathcal{O}(h^3)$ (since the low-order space has polynomial degree $q=2$), the restriction $R\Pi_H f$ has $L^2$ error that scales like $\mathcal{O}(h^2)$.
Similarly, the error of $P\Pi_L f$ and the difference in total integrals for both of these quantities scale like $\mathcal{O}(h^2)$.
This is because the total volumes of the high-order and low-order refined meshes differ by $\mathcal{O}(h^2)$.

\begin{table}[ht]
   \centering
   \small
   \caption{Convergence results on two-dimensional mixed mesh with curved elements and piecewise-quadratic low-order space ($n=4$, $q=2$ and $p=5$).}
   \label{tab:curved-mesh}
   \begin{tabular}{r|ccc|cccc}
      \toprule
      & \multicolumn{3}{c}{$\Pi_H f$} & \multicolumn{4}{c}{$R\Pi_H f$}\\
      Ref. & $L^2$ error & Rate & Integral & $L^2 error$ & Rate & Integral & Rate\\
      \midrule
      0 & $4.80\times10^{-4}$ & --- & $1.07\times10^{-14}$ &
      $3.69\times10^{-3}$ & --- & $8.72\times10^{-4}$ & ---\\
      1 & $1.12\times10^{-5}$ & 5.43 & $1.15\times10^{-14}$ &
      $8.48\times10^{-4}$ & 2.12 & $1.82\times10^{-4}$ & 2.26\\
      2 & $1.91\times10^{-7}$ & 5.87 & $4.44\times10^{-15}$ &
      $2.11\times10^{-4}$ & 2.01 & $4.42\times10^{-5}$ & 2.04\\
      3 & $2.84\times10^{-9}$ & 6.07 & $1.95\times10^{-14}$ &
      $5.27\times10^{-5}$ & 2.00 & $1.10\times10^{-5}$ & 2.01\\
      \bottomrule
   \end{tabular}

   \vspace{\floatsep}

   \begin{tabular}{r|ccc|cccc}
      \toprule
      & \multicolumn{3}{c}{$\Pi_L f$} & \multicolumn{4}{c}{$P\Pi_L f$}\\
      Ref. & $L^2$ error & Rate & Integral & $L^2$ error & Rate & Integral & Rate\\
      \midrule
      0 & $1.05\times10^{-3}$ & --- & $1.87\times10^{-14}$ &
      $3.23\times10^{-3}$ & --- & $8.80\times10^{-4}$ & ---\\
      1 & $1.41\times10^{-4}$ & 2.90 & $2.66\times10^{-14}$ &
      $7.51\times10^{-4}$ & 2.10 & $1.83\times10^{-4}$ & 2.27\\
      2 & $1.77\times10^{-5}$ & 2.99 & $5.95\times10^{-14}$ &
      $1.89\times10^{-4}$ & 1.99 & $4.42\times10^{-5}$ & 2.05\\
      3 & $2.22\times10^{-6}$ & 3.00 & $4.17\times10^{-14}$ &
      $4.72\times10^{-5}$ & 2.00 & $1.10\times10^{-5}$ & 2.01\\
      \bottomrule
   \end{tabular}
\end{table}

\subsection{Preconditioning}
\label{sec:numerical-preconditioning}

In this example, we consider the diagonal preconditioning of $P$ defined in \Cref{sec:preconditioning-P}.
Recall that $P = (R^T M_L R)^{-1} R^T M_L$.
\Cref{prop:preconditioning-P} established that the condition number of $D^{-1} (R^T M_L R)$, where $D$ is the diagonal of the high-order mass matrix, is independent of the mesh size and polynomial degree.
We consider the two-dimensional straight-sided mesh shown in \Cref{fig:ho-lor-mesh}, with four levels of uniform refinement.
The high-order space is a $H^1$-conforming finite element space with polynomial degree $p$, and $V_L$ is a $L^2$ low-order refined finite element space with polynomial degree $q$.
We record the number of conjugate gradient iterations required to converge to a relative tolerance of $10^{-12}$ in \Cref{tab:preconditioning}.
We observe that the iteration counts remain bounded both with increasing refinements and increasing high-order polynomial degree $p$.
Additionally, the iteration counts with low-order polynomial degree $q=1$ are uniformly lower than the corresponding iteration counts with $q=0$.

\begin{table}[ht]
   \centering
   \small
   \caption{Conjugate gradient iterations required to solve $R^T M_L R$ with diagonal preconditioning for a relative tolerance of $10^{-12}$.}
   \label{tab:preconditioning}
   \begin{tabular}{c|ccccc|ccccc}
      \toprule
      & \multicolumn{5}{c}{$q=0$} & \multicolumn{5}{c}{$q=1$} \\
      Ref. & $p=1$ & $p=2$ & $p=3$ & $p=4$ & $p=5$ & $p=1$ & $p=2$ & $p=3$ & $p=4$ & $p=5$ \\
      \midrule
      0 & 23 & 37 & 38 & 39 & 35 & 21 & 22 & 22 & 21 & 18 \\
      1 & 39 & 38 & 36 & 32 & 31 & 32 & 23 & 20 & 17 & 14 \\
      2 & 41 & 35 & 30 & 29 & 28 & 31 & 21 & 17 & 14 & 11 \\
      3 & 37 & 32 & 27 & 27 & 25 & 29 & 19 & 14 & 10 & 9  \\
      \bottomrule
   \end{tabular}
\end{table}

\subsection{AMR coarsening}
\label{sec:amr}

Consider a conforming mesh, obtained from a coarse mesh through a series of uniform refinements.
This mesh is then further refined through a series of non-conforming (potentially anisotropic) refinements, resulting in a non-matching mesh with hanging nodes \cite{Cerveny2019}.
These refinements could be driven through an adaptive process; in this example, the refinements are performed randomly.
Let $V_C$ denote a degree-$p$ finite element space on the conforming mesh, and $V_{\NC}$ denote the degree-$p$ space on the nonconforming mesh.
Since $V_C \subseteq V_{\NC}$, the natural injection $R : V_C \hookrightarrow V_{\NC}$ satisfies the properties of the $R$ operator as defined in \Cref{thm:abstract-properties}.
Defining the $P : V_{\NC} \to V_C$ operator as in \Cref{thm:abstract-properties} gives a method for \textit{coarsening} a field defined on $V_{\NC}$.
Let $M_{\NC}$ denote the mass matrix defined on the space $V_{\NC}$.
Then, the $P$ operator takes the form $P = (R^T M_{\NC} R)^{-1} R^T M_{\NC}$.
As in the cases of the low-order refined transfer operators, computing the action of $A^{-1}$, where $A = R^T M_{\NC} R$, generally requires a globally coupled solve.
The arguments of \Cref{sec:preconditioning-P} apply also to this operator: $A$ is symmetric and positive-definite, and $A$ is spectrally equivalent to $M_C$, the mass matrix defined on the conforming space.
We therefore use the diagonal of the mass matrix defined on the conforming mesh as a preconditioner for the operator $R^T M_{\NC} R$.

\begin{rem}[Matrix-free implementation of AMR coarsening]
   As discussed in \Cref{sec:efficient-implementation}, the matrix-free action of $M_{\NC}$ can be performed efficiently using sum factorization techniques.
   Similarly, the diagonal of $M_C$, required for preconditioning, can be obtained using matrix-free algorithms.
   Therefore, given an efficient matrix-free representation of the injection $R : V_C \hookrightarrow V_{\NC}$, the coarsening operator $P$ also has an efficient matrix-free implementation.
\end{rem}

To numerically study the behavior of this coarsening operator, we consider a fixed two dimensional mesh, and perform $\ell$ uniform refinements.
Subsequently, a sequence of random refinements is performed to obtain the nonconforming mesh.
Let $V_{C}$ and $V_{\NC}$ be degree-5 $H^1$-conforming spaces defined on the conforming and nonconforming meshes, respectively.
The function $u_h \in V_{\NC}$ is obtained by interpolating a given function $f$ at nodal points.
Then, a coarsened function $P u_h \in V_C$ is obtained by applying the coarsening operator $P$.
An example of this transfer process is illustrated in \Cref{fig:amr}.
In \Cref{tab:amr}, we present convergence results for the transferred solution $P u_h$.
We note that $\| P u_h - f \|_0 = \mathcal{O}(h^{p+1})$.
Additionally, we compute the conservation error by comparing the integrals of the solutions $u_h$ and $P u_h$.
Verifying the conservation properties of \Cref{thm:abstract-properties}, we see that the transfer operator is conservative up to machine precision.

\begin{figure}
   \centering
   \ifsiam\setlength{\fw}{1.25in}\else\setlength{\fw}{1.5in}\fi
   \begin{minipage}{\fw}
      \centering
      \includegraphics[width=\linewidth]{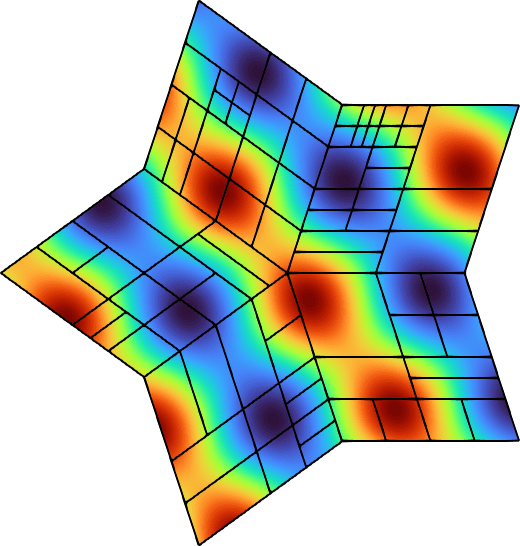}\\[12pt]
      $u_h \in V_{\NC}$
   \end{minipage}
   \hspace{0.25in}
   {\Large $\xrightarrow{\mathmakebox[2em]{P}}$}
   \hspace{0.25in}
   \begin{minipage}{\fw}
      \centering
      \includegraphics[width=\linewidth]{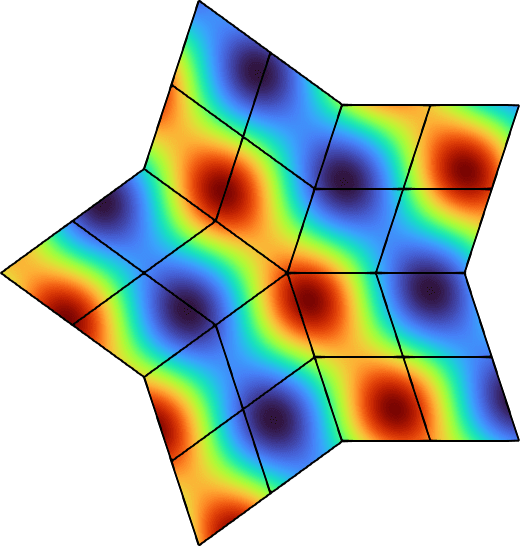}\\[12pt]
      $P u_h \in V_C$
   \end{minipage}
   \caption{Illustration of coarsening operator $P$, mapping from the finite element space $V_{\NC}$ defined on the nonconforming mesh (left) to the coarser space $V_C$ defined on the conforming mesh (right).}
   \label{fig:amr}
\end{figure}

\begin{table}
   \centering
   \small
   \caption{Convergence results for AMR coarsening.}
   \label{tab:amr}
   \begin{tabular}{r|cc|cc|c}
      \toprule
      Ref. & $\|u_{h} - f\|_0$ & Rate & $\|P u_{h} - f\|_0$ & Rate & $\left| \int_\Omega (u_{h} - Pu_{h}) \, dx \right|$ \\
      \midrule
      0 & $3.02\times10^{-4}$ & --- & $4.32\times10^{-4}$ & --- & $7.99\times10^{-15}$ \\
      1 & $5.96\times10^{-6}$ & 5.66 & $9.46\times10^{-6}$ & 5.51 & $8.88\times10^{-15}$ \\
      2 & $9.10\times10^{-8}$ & 6.03 & $1.43\times10^{-7}$ & 6.05 & $6.48\times10^{-14}$ \\
      3 & $1.60\times10^{-9}$ & 5.83 & $2.18\times10^{-9}$ & 6.03 & $1.10\times10^{-13}$ \\
      4 & $2.44\times10^{-11}$ & 6.04 & $3.31\times10^{-11}$ & 6.04 & $2.66\times10^{-14}$ \\
      \bottomrule
   \end{tabular}
\end{table}

\subsection{Conservative multi-discretization coupling}

To demonstrate the utility of these transfer operators for multiphysics or multi-discretization applications, we consider the coupling of a high-order finite element method to a high-order structured finite volume method.
The space $V_H$ is chosen to be a degree-$p$ piecewise polynomial finite element space defined on a two-dimensional Cartesian grid denoted $\mathcal{T}_H$ of the spatial domain $\Omega$.
In principle, the space $V_H$ can be chosen to be either a continuous Galerkin or discontinuous Galerkin space; in this example, we choose $V_H$ to be a continuous space.
The low-order space $V_L$ is a piecewise constant (i.e. finite volume) space defined on a mesh $\mathcal{T}_L$, which is obtained from $\mathcal{T}_H$ through uniform refinements.
Each element of $\mathcal{T}_H$ is subdivided into at least $(p+1)^2$ sub-elements.

Given $u_H^0 \in V_H$, which could be obtained, for example, through the solution of a high-order finite element problem, we compute $u_L^0 = R u_H^0$.
This piecewise-constant field is used as the initial condition for a finite volume discretization of the scalar advection equation $u_t + \nabla\cdot(\bm\beta u) = 0$.
The finite volume discretization evolves the cell averages $\overline{u}_i$ by integrating reconstructed polynomials on cell faces using an upwind numerical flux.
The initial condition is integrated in time using the standard fourth-order Runge--Kutta method to obtain the solution $u_L^N$.
Using degree-$q$ polynomial reconstructions in the finite volume discretization, the spatial error scales as $\mathcal{O}^(h^{q+1})$, where $h$ is the element size of the mesh $\mathcal{T}_L$.
Additionally, since we use a conservative finite volume method, the total mass is conserved, i.e.
$\int_\Omega u_L^{0} \, dx = \int_\Omega u_L^{N} \, dx.$
The piecewise-constant field $u_L^N$ is transferred to the high-order finite element space using the prolongation operator $P$, i.e.\ $u_H^N = P u_L^N$.
The conservation properties of the transfer operators (\Cref{thm:abstract-properties}) and the accuracy of the prolongation operator (\Cref{thm:P-accuracy}) guarantee that the solution $u_H^N$ in the high-order finite element space will have accuracy $\mathcal{O}(h^{\min\{p+1,q+1\}})$, and that the total mass will be conserved, $\int_\Omega u_H^{0} \, dx = \int_\Omega u_H^{N} \, dx.$

We numerically verify these properties on the unit square $\Omega = [0,1]^2$, with the rotational velocity field $\bm\beta = (2y-1, 1-2x)^T$ and periodic boundary conditions.
The initial condition is taken to be the sum of two Gaussian perturbations,
\[
   u^0 =
   \exp\left(-200\left(
      \left(x-\tfrac{1}{4}\right)^2 + \left(y-\tfrac{1}{2}\right)^2
   \right)\right)
   +
   \exp\left(-200\left(
      \left(x+\tfrac{1}{4}\right)^2 + \left(y-\tfrac{1}{2}\right)^2
   \right)\right).
\]
The high-order initial condition $u_H^0$ is obtained by interpolating $u^0$ at the Gauss--Lobatto nodes of the high-order space $V_H$, such that $\| u_H^0 - u^0 \|_0 = \mathcal{O}(h^{p+1})$.
We use polynomial degree $p=2$, subdivide each mesh element into $4^2$ subelements (so that $V_L$ has strictly more degrees of freedom than $V_H$), and use a fourth-order finite volume method.
The equations are integrated in time for one quarter revolution, until $t=\pi/4$.
Snapshots of the initial and final solutions are shown in \Cref{fig:fv} and convergence results are displayed in \Cref{tab:fv}.
We begin with a $10 \times 10$ Cartesian grid, and refine uniformly four times to compute the observed rates of convergence.
The $L^2$ error of the piecewise-constant solution scales as $\mathcal{O}(h)$.
However, since a fourth-order finite volume reconstruction is used, the $L^2$-norm difference between the finite volume solution $u_L^N$ and the $L^2$ projection of the exact solution $\Pi_L u|_{t=\pi/4}$ scales as $\mathcal{O}\left(h^{\max\{p+1,q+1\}}\right) = \mathcal{O}(h^3)$ (that is to say, the finite volume cell averages $\overline{u}_i$ provide high-order approximations to the cell averages of the true solution).
This high-order of accuracy is preserved when transferring the finite volume solution back to the high-order finite element space; we observe $\mathcal{O}(h^{p+1})$ convergence in $L^2$ norm of $P u_L^N$.
Additionally, we verify that the conservation error remains at the level of machine precision for all test cases performed.

\begin{figure}[ht]
   \centering
   \def\w{5}
   \begin{tikzpicture}[scale=0.8]
      \node[anchor=south] at (0,0) {\includegraphics[width=\w cm]{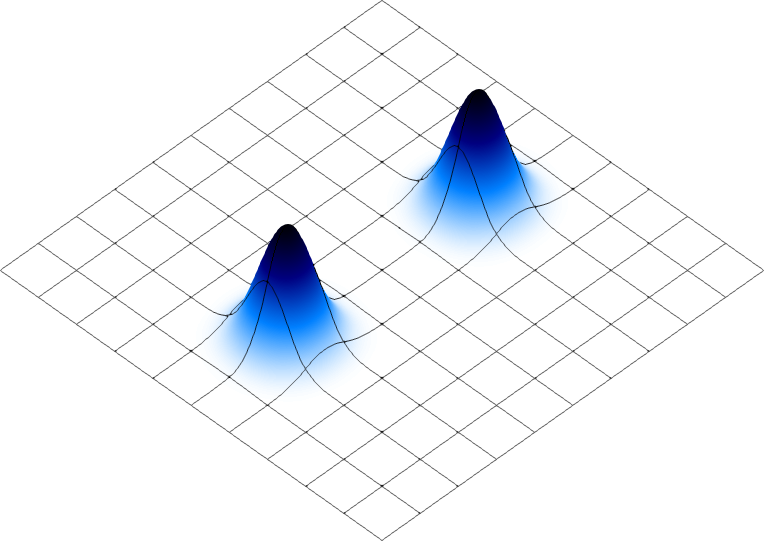}};
      \node at (0,-0.5) {$u_H^0 \in V_H$};

      \draw [thick,->] (0.5*\w + 0.5 ,1.5) -- ++(1.25,0) node[midway,above] {\large $R$};

      \node[anchor=south] at (1.5*\w,0) {\includegraphics[width=\w cm]{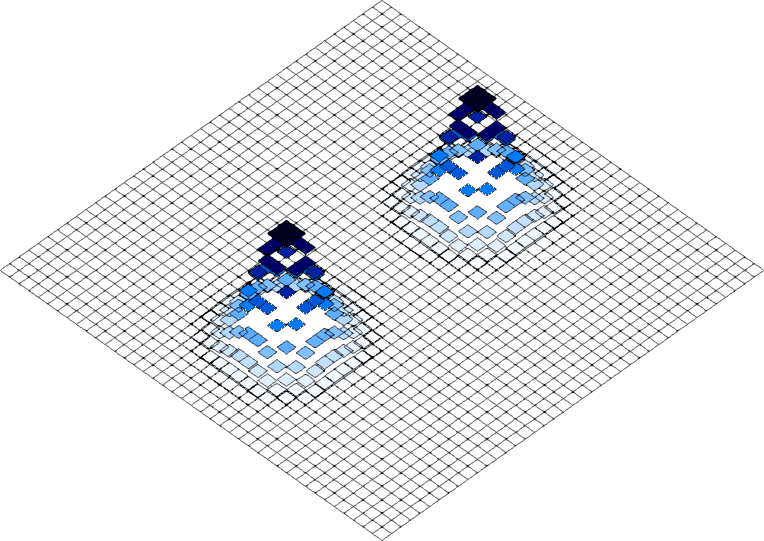}};
      \node at (1.5*\w,-0.5) {$u_L^0 = R u_H^0 \in V_L$};

      \draw [thick,->] (1.5*\w, -1.25) -- ++(0,-0.25*\w)
      node[draw,thin,midway,right,xshift=12pt,align=center,rounded corners=5,inner sep=5]
      {Finite volume\\time evolution};

      \begin{scope}[shift={(0,-1.5*\w)}]
         \node[anchor=south] at (0,0) {\includegraphics[width=\w cm]{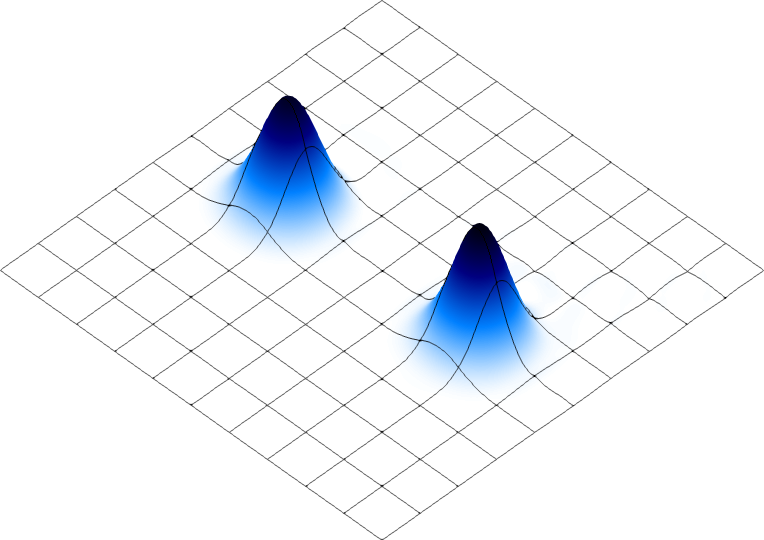}};
         \node at (0,-0.5) {$u_H^N = P u_L^N \in V_H$};

         \draw [thick,<-] (0.5*\w + 0.5 ,1.5) -- ++(1.25,0) node[midway,above] {\large $P$};

         \node[anchor=south] at (1.5*\w,0) {\includegraphics[width=\w cm]{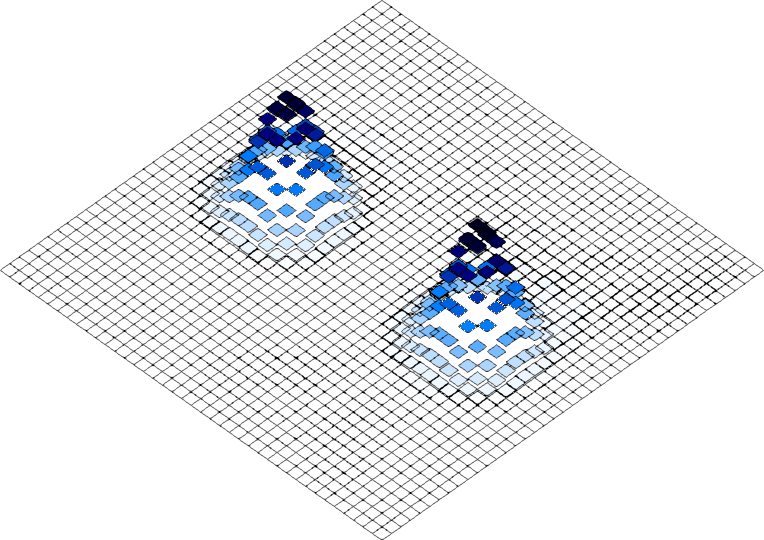}};
         \node at (1.5*\w,-0.5) {$u_L^N \in V_L$};
      \end{scope}
   \end{tikzpicture}

   \caption{
      High-order and conservative solution transfer between a high-order finite element representation and a finite volume representation.
      Starting with restricted high-order initial condition, $u_L^0 = R u_H^0$, a high-order finite volume reconstruction is used to integrate the advection equation in time $u_L^0 \mapsto u_L^N$, obtaining a solution which is transferred to the finite element space using the prolongation operator $u_H^N = P u_L^N$.
   }
   \label{fig:fv}
\end{figure}

\begin{table}
   \centering
   \footnotesize
   \caption{Convergence results for multi-discretization coupling.}
   \label{tab:fv}
   \begin{tabular}{r|cc|cc|cc|c}
      \toprule
      $n_x$ & $\|u_L^N - u\|_0$ & Rate & $\|u_L^N - \Pi_L u\|_0$ & Rate & $\| P u_L^N - u \|_0$ & Rate & $\left| \int_\Omega (u_H^N - u_H^0) \, dx \right|$ \\
      \midrule
      10 & $2.12\times10^{-2}$ & --- & $1.14\times10^{-1}$ & --- & $1.26\times10^{-2}$ & --- & $6.94\times10^{-18}$ \\
      20 & $9.12\times10^{-3}$ & 1.22 & $1.60\times10^{-2}$ & 2.84 & $1.58\times10^{-3}$ & 3.00 & $8.19\times10^{-16}$ \\
      40 & $4.52\times10^{-3}$ & 1.01 & $1.47\times10^{-3}$ & 3.44 & $1.90\times10^{-4}$ & 3.05 & $3.68\times10^{-16}$ \\
      80 & $2.26\times10^{-3}$ & 1.00 & $1.95\times10^{-4}$ & 2.92 & $2.44\times10^{-5}$ & 2.96 & $3.33\times10^{-16}$ \\
      160 & $1.13\times10^{-3}$ & 1.00 & $2.40\times10^{-5}$ & 3.02 & $3.07\times10^{-6}$ & 2.99 & $2.35\times10^{-15}$ \\
      \bottomrule
   \end{tabular}
\end{table}

\section{Conclusions}
\label{sec:conclusions}

In this paper, we introduced solution transfer operators between high-order and low-order finite element spaces.
The operators are defined in a general, abstract context, but particular attention is paid to the case of \textit{low-order refined} spaces, whereby the low-order finite element space is obtained by refining the coarse elements of the original high-order mesh.
The transfer operators are shown to be conservative, constant preserving, and accurate.
In particular, we show that when Gauss--Lobatto nodes are used to define the low-order refined mesh, the accuracy of the prolongation operator does not degrade as the polynomial degree is increased.
Efficient implementation techniques, including sum factorization and matrix-free preconditioning are discussed.
The theoretical properties, including accuracy and conservation are illustrated with a number of numerical examples.

\section*{Acknowledgements}

We gratefully acknowledge the valuable contributions of V.\ Dobrev to both the initial conceptualization and implementation in MFEM of the transfer operators described in \Cref{sec:mappings}.

This work was performed under the auspices of the U.S.\ Department of Energy by Lawrence Livermore National Laboratory under Contract DE-AC52-07NA27344 (LLNL-JRNL-819814).
This document was prepared as an account of work sponsored by an agency of the United States government.
Neither the United States government nor Lawrence Livermore National Security, LLC, nor any of their employees makes any warranty, expressed or implied, or assumes any legal liability or responsibility for the accuracy, completeness, or usefulness of any information, apparatus, product, or process disclosed, or represents that its use would not infringe privately owned rights.
Reference herein to any specific commercial product, process, or service by trade name, trademark, manufacturer, or otherwise does not necessarily constitute or imply its endorsement, recommendation, or favoring by the United States government or Lawrence Livermore National Security, LLC.
The views and opinions of authors expressed herein do not necessarily state or reflect those of the United States government or Lawrence Livermore National Security, LLC, and shall not be used for advertising or product endorsement purposes.

\ifsiam
\small
\bibliographystyle{abbrv}
\else
\bibliographystyle{siamplain}
\fi
\bibliography{refs}

\end{document}